\newtheorem{theorem}{Theorem}[section]
\newtheorem{proposition}[theorem]{Proposition}
\newtheorem{lemma}[theorem]{Lemma}
\newtheorem{corollary}[theorem]{Corollary}
\newtheorem{remark}[theorem]{Remark}
\newcommand{\R}{\mathbb{R}}
\newcommand{\Ab}{{\bf A}}
\DeclareMathOperator{\supp}{supp}
\DeclareMathOperator{\Div}{div}
\DeclareMathOperator{\curl}{curl}
\DeclareMathOperator{\Osc}{Osc}
\begin{document}

\title[On the ground state energy of the Pauli operator]{On the 
semi-classical analysis of the groundstate energy of the Dirichlet Pauli operator 
in non-simply connected domains}

\author{Bernard Helffer}
\author{Mikael Persson Sundqvist}
\address[Bernard Helffer]{ Laboratoire de Math\'ematiques Jean Leray, 
Universit\'e de Nantes,  2 rue de la Houssini\`ere, 44322 Nantes, France
and Laboratoire de
Math\'{e}matiques, Universit\'{e} Paris-Sud, France. 
}
\email{bernard.helffer@univ-nantes.fr}
\address[Mikael Persson Sundqvist]{Lund University, Department of Mathematical 
Sciences, Box 118, 221\ 00 Lund, Sweden.}
\email{mickep@maths.lth.se}
\subjclass[2010]{35P15; 81Q05, 81Q20}
\keywords{Pauli operator, Dirichlet, semiclassical, flux effects}

\begin{abstract}
We consider the Dirichlet Pauli operator in bounded connected 
domains in the plane, 
with a semi-classical parameter. We show, in particular, that the
ground state energy of this Pauli operator will be exponentially small as the
semi-classical parameter tends to zero and estimate this decay rate.
This extends our results~\cite{HPS}, discussing the results of a recent 
paper by Ekholm--Kova\v{r}\'ik--Portmann~\cite{EKP}, to include also non-simply 
connected domains.
\end{abstract}
\maketitle
\section{Introduction}

Let $\Omega$ be a connected, regular domain in $\mathbb R^2$
and let $B=B(x)$ be a magnetic field  in $C^\infty(\bar \Omega)$  and $h>0$ a semiclassical parameter.
We are interested in the analysis of the ground state energy 
$ \lambda_{P_-}^D(h,\Ab, B,\Omega)$ of the Dirichlet realization of the Pauli operator
\[
P_-:= (hD_{x_1} -A_1)^2 + (hD_{x_2}-A_2)^2 - h B(x)\,.
\]
Here $D_{x_j} = -i \partial _{x_j}$ for $j=1,2\,$ and the vector potential
$\Ab=(A_1,A_2)$ satisfies
\begin{equation}\label{eq:0}
B(x) = \partial_{x_1} A_2 - \partial_{x_2} A_1\,.
\end{equation}
The reference to $\Ab$ is not necessary when $\Omega$ is simply connected (in this case, we omit it as in \cite{HPS})  but could play an important role if the domain is not simply connected.
It is a well-known fact that the Pauli operator is non-negative (this follows
by an integration by parts). This implies that
\[
\lambda_{P_-}^D(h,\Ab, B,\Omega) \geq 0\,.
\]
Under the assumption  that 
\begin{equation}\label{eq:signcondition}
\{ x \in \Omega : B(x) >0\} \neq \varnothing\,,
\end{equation}
we know from~\cite{EKP,HPS} that $\lambda_{P_-}^D(h,\Ab, B,\Omega)$ is exponentially 
small as the semi-classical parameter $h>0$ tends to $0$. In particular, 
T.~Ekholm, H.~Kova\v{r}\'ik and F.~Portmann \cite{EKP}  give a lower bound which has a universal character and will be extended
 in the following way
\begin{theorem}\label{thmEKPNSC}
 Let $\Omega$ be regular, bounded, connected in $\mathbb R^2$.  If $B$ does not vanish identically in $\Omega$ there exists $\epsilon >0$ such 
that, for all $ h >0$ and for all $\Ab$ such that $\curl \Ab=B$,
\begin{equation}\label{ineq1}
 \lambda_{P_-}^D(h,\Ab, B,\Omega) \geq   \lambda^D( \Omega) \, h^2\,  \exp \bigl(- \epsilon/h\bigr) \,.
 \end{equation}
 where 
$\lambda^D(\Omega)$ denotes the ground state energy of the Laplacian
on $\Omega$.
 \end{theorem}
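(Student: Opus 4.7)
The strategy combines the classical Pauli factorization with a single-valued weighted substitution and the diamagnetic inequality, exploiting only the \emph{closedness} of the reduced vector potential so as to accommodate the non-simply-connected topology. Specifically, set $Q := (hD_{x_1}-A_1)+i(hD_{x_2}-A_2)$, so that~\eqref{eq:0} gives $P_-=Q^*Q$ and hence $\langle P_-u,u\rangle = \|Qu\|^2_{L^2(\Omega)}$ on the form domain. I then solve the Poisson problem $\Delta\phi = B$ in $\Omega$ with $\phi = 0$ on $\dO$; by elliptic regularity $\phi\in C^\infty(\bar\Omega)$, and since $B\not\equiv 0$ one has $\Osc(\phi)>0$. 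Writing $A^\phi:=(-\partial_{x_2}\phi, \partial_{x_1}\phi)$, which satisfies $\curl A^\phi = B$, the difference $\alpha := \Ab - A^\phi$ is a smooth \emph{closed} $1$-form on $\Omega$ (generally not exact when $\Omega$ is multiply connected). The single-valued substitution $u = e^{-\phi/h}W$, $W\in H_0^1(\Omega)$, then yields after a direct computation
\[
Qu = e^{-\phi/h}\,\tilde Q W, \qquad \tilde Q := (hD_{x_1}-\alpha_1)+i(hD_{x_2}-\alpha_2),
\]
and therefore $\|Qu\|^2 = \int_\Omega e^{-2\phi/h}|\tilde Q W|^2\,\dd x$.

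Next I bound $|\tilde Q W|^2$ from below. Because $\alpha$ is closed, $\tilde Q^*\tilde Q = (hD-\alpha)^2 - h\curl\alpha$ reduces to $(hD-\alpha)^2$, and the diamagnetic inequality combined with the scalar Dirichlet--Poincar\'e inequality applied to $|W|\in H_0^1(\Omega)$ gives
\[
\int_\Omega|\tilde Q W|^2\,\dd x \;\geq\; h^2\int_\Omega|\nabla|W||^2\,\dd x \;\geq\; h^2\lambda^D(\Omega)\int_\Omega|W|^2\,\dd x.
\]
Sandwiching the weight $e^{-2\phi/h}$ between its extrema,
\[
\|Qu\|^2 \;\geq\; e^{-2\max\phi/h}h^2\lambda^D(\Omega)\int_\Omega|W|^2\,\dd x \;\geq\; h^2\lambda^D(\Omega)\,e^{-2\Osc(\phi)/h}\|u\|^2,
\]
which is~\eqref{ineq1} with $\epsilon := 2\Osc(\phi)>0$, a constant independent of $\Ab$.

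The obstacle to overcome is conceptual rather than technical: on a non-simply-connected $\Omega$ one is tempted to eliminate $\alpha$ by introducing a multi-valued primitive $\psi$ with $\alpha = d\psi$ and a ``gauge'' $e^{i\psi/h}$, but the holonomies $e^{i\gamma_j/h}$ then depend on $h$ in an uncontrolled way. The plan above avoids any multi-valued transformation: the only topological input used is $\curl\alpha = 0$, which is also what forces the bound to be uniform in $\Ab$---a feature one must preserve, since on a non-simply connected domain different vector potentials with the same $B$ yield genuinely non-unitarily-equivalent Pauli operators.
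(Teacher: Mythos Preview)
Your argument is correct and takes a route genuinely different from the paper's. The paper first puts $\Ab$ in the gauge~\eqref{eq:5a}, then constructs the \emph{exact} generating function $\psi^\Phi$ of $\Ab$ (Proposition~\ref{propexist}), so that the substitution $u=e^{-\psi^\Phi/h}v$ eliminates the magnetic potential entirely and yields the identity~\eqref{eq:pr1}; the lower bound then follows from $\int_\Omega|(\partial_{x_1}+i\partial_{x_2})v|^2\,dx=\int_\Omega|\nabla v|^2\,dx$ for $v\in H_0^1(\Omega)$. This produces $\epsilon=2\,\Osc(\psi^\Phi)$, which \emph{depends on the circulations} of $\Ab$. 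You instead fix $\phi=\psi_0$ once and for all, accept the residual closed $1$-form $\alpha=\Ab-A^\phi$, and dispose of it via the diamagnetic inequality for $(hD-\alpha)^2$. Your $\epsilon=2\,\Osc(\psi_0)$ is therefore independent of $\Ab$, so the uniformity over $\Ab$ asserted in Theorem~\ref{thmEKPNSC} comes out in one stroke, whereas the paper needs the gauge-invariance machinery of Sections~\ref{s4}--\ref{s7} to recover it (and, by Remark~\ref{Rem5.4}, $\Osc(\psi_0)$ is in fact the smallest value of $\Osc(\psi^\Phi)$ when $B>0$). On the other hand, the paper's exact identity~\eqref{eq:pr1} is what is reused for the upper bound in Section~\ref{s6} and for tracking the $\Phi$-dependence, information your diamagnetic shortcut does not retain.
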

In \cite{EKP}, the theorem is proven under the assumption that $\Omega$ is 
simply connected.  Without this assumption, their proof corresponds to a 
specific choice of the magnetic vector potential $\Ab$ associated with~$B$ in 
the definition of the Pauli operator. 
To be more precise, the magnetic potential is the restriction to $\Omega$ of a magnetic potential 
associated with a magnetic field $B$ which is given in a ball containing $\Omega$. 
Hence, in~\cite{EKP}, the circulation of the magnetic potential along the 
interior boundaries is determined by the flux of this extended~$B$ inside the 
corresponding hole. In general the circulations relative to each boundary of a hole are independent parameters. 
The extension to non-simply connected domains is relatively easy by domain monotonicity 
of the ground state energy in the case of the Dirichlet problem,
\[
\lambda_{P_-}^D(h,\Ab, B,\Omega) \geq  \lambda_{P_-}^D(h,\widetilde B,\widetilde \Omega) \,,
\]
as soon as we have constructed an  extension of  $\Ab$ and $B$ to $\widetilde \Omega$. We will actually proceed more directly.  Note  however that 
we might end up  in this way with something very far from the optimal $\epsilon$ in~\eqref{ineq1}, obtained in the simply connected case.

The proof in~\cite{EKP}  gives a way of computing some lower bound for 
$\epsilon$, by considering the oscillation of $\psi$ for any solution of 
$\Delta \psi =B$ and optimizing over $\psi$. We will just use a specific 
choice generalizing to the non-simply connected case the choice proposed 
in~\cite{HPS}. The main theorem in~\cite{HPS} was 
\begin{theorem} \label{th1.3}
 If $B(x) >0$, $\Omega$ is simply connected  and if $\psi_0$ is the solution of 
\[
\Delta \psi_0 = B(x) \text{ in } \Omega \,,\, \psi_{0/\partial \Omega} =0\,,
\]
then, for any $h>0$, 
\[
 \lambda_{P_-}^D(h, B,\Omega) \geq   \lambda^D(\Omega) \, h^2\,  \exp \bigl(2 \inf \psi_0 /h\bigr) \,.
\]
In the semi-classical limit,
\[
\lim_{h\rightarrow 0} h \log  \lambda_{P_-}^D(h,B,\Omega) \leq 2 \inf \psi_0\,.
\]
Here, $\lambda^D(\Omega)$ denotes the ground state energy of the Dirichlet
Laplacian in $\Omega$.
\end{theorem}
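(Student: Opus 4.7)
The plan is to exploit a complex-analytic factorisation of $P_-$. Since $\Omega$ is simply connected and $B\in C^\infty(\overline\Omega)$, I would first solve the Poisson problem $\Delta \psi_0 = B$, $\psi_{0|\partial\Omega}=0$ and fix the gauge $\Ab := (-\partial_{x_2}\psi_0,\partial_{x_1}\psi_0)$, which satisfies $\curl \Ab = B$. Any other vector potential for $B$ differs from $\Ab$ by the gradient of a smooth function, so the Pauli operator is unitarily gauge-equivalent to the one built from this $\Ab$. Using the Wirtinger derivatives $\partial_z = \tfrac12(\partial_{x_1}-i\partial_{x_2})$, $\partial_{\bar z}=\tfrac12(\partial_{x_1}+i\partial_{x_2})$, a direct calculation yields
\[
(hD_z - A_z)u = -ih\, e^{\psi_0/h}\,\partial_z\!\bigl(e^{-\psi_0/h}u\bigr),\quad (hD_{\bar z}-A_{\bar z})u = -ih\, e^{-\psi_0/h}\,\partial_{\bar z}\!\bigl(e^{\psi_0/h}u\bigr),
\]
from which the factorisation $P_- = 4(hD_z-A_z)(hD_{\bar z}-A_{\bar z})$ follows by a short commutator computation.

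Introducing $w := e^{\psi_0/h}u$ (a bijection of $H^1_0(\Omega)$ to itself since $e^{\pm\psi_0/h}$ is smooth and strictly positive), the operator identity above turns the quadratic form into
\[
\langle P_- u, u\rangle = 4h^2 \int_\Omega e^{-2\psi_0/h}\, |\partial_{\bar z}w|^2 \,\dd x,\qquad
\|u\|^2 = \int_\Omega e^{-2\psi_0/h}\,|w|^2\,\dd x.
\]
Because $\Delta\psi_0 = B > 0$, the strong maximum principle gives $\psi_0 < 0$ in $\Omega$, and $\inf\psi_0<0$ is attained at some interior point; in particular, $1\leq e^{-2\psi_0/h} \leq e^{-2\inf\psi_0/h}$. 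For the lower bound I would keep the weight in the numerator, replace it by its supremum in the denominator, and apply the identity $4\int|\partial_{\bar z}w|^2 = \int |\nabla w|^2$ (valid for $w\in H^1_0(\Omega)$ because the cross term is real by integration by parts) together with the Dirichlet Poincaré inequality $\int|\nabla w|^2\geq \lambda^D(\Omega)\int|w|^2$. This immediately produces $\lambda_{P_-}^D(h,B,\Omega)\geq h^2\lambda^D(\Omega)\,e^{2\inf\psi_0/h}$.

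For the semiclassical upper bound I would test the form above against $u_\delta = e^{-\psi_0/h}\chi_\delta$, where $\chi_\delta \in C^\infty_c(\Omega)$ equals $1$ on $\{\psi_0\leq -\delta\}$ and is supported in $\{\psi_0\leq -\delta/2\}$ (e.g.\ $\chi_\delta = \phi(\psi_0/\delta)$ for a fixed smooth template $\phi$). The corresponding $w$ equals $\chi_\delta$, so $\partial_{\bar z}w$ is supported in $\{-\delta\leq \psi_0\leq -\delta/2\}$, where $e^{-2\psi_0/h}\leq e^{2\delta/h}$, giving a numerator bound $\leq C_\delta\, e^{2\delta/h}$. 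For the denominator I would restrict the integral to a ball $B(x_0,\sqrt h)$ around a minimiser $x_0$ of $\psi_0$ and Taylor-expand, producing a lower bound of order $h\,e^{-2\inf\psi_0/h}$. Combining these estimates,
\[
\lambda_{P_-}^D(h,B,\Omega) \leq C_\delta\, h\, e^{(2\delta+2\inf\psi_0)/h},
\]
so $\limsup_{h\to 0^+} h\log\lambda_{P_-}^D(h,B,\Omega) \leq 2\delta + 2\inf\psi_0$, and letting $\delta\to 0^+$ completes the upper bound.

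The crucial step is establishing the factorisation with the correct sign: the whole approach depends on the weight in front of $|\partial_{\bar z}w|^2$ being $e^{-2\psi_0/h}$ (large near the interior minimiser of $\psi_0$) rather than $e^{+2\psi_0/h}$, which forces a coherent choice of $P_-$ (as opposed to $P_+$), of the gauge $\Ab=\nabla^\perp\psi_0$, and of the conjugation direction $w=e^{\psi_0/h}u$. Once this identity is in hand, the lower bound is essentially the Dirichlet Poincaré inequality applied to $w$, and the upper bound is a Laplace-type concentration of the weight at the interior minimum of $\psi_0$.
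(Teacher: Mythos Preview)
Your argument is correct and follows essentially the same route as the paper. The paper (Section~3, identity~\eqref{eq:pr1}) uses the same conjugation $v=e^{\psi_0/h}u$ and the same quadratic-form identity
\[
\langle P_-u,u\rangle = h^2\!\int_\Omega e^{-2\psi_0/h}\,\bigl|(\partial_{x_1}+i\partial_{x_2})v\bigr|^2\,dx,
\]
then bounds the weight by $e^{-2\psi_{\max}/h}=1$ in the numerator, uses $\int|(\partial_{x_1}+i\partial_{x_2})v|^2=\int|\nabla v|^2$ for $v\in H^1_0$, applies Poincar\'e, and converts $\int|v|^2$ back to $\int|u|^2$ at the cost of $e^{2\psi_{\min}/h}$. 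For the upper bound (Section~6.1) the paper takes the same trial state $u=e^{-\psi_0/h}\chi$ with a cutoff equal to $1$ away from a thin boundary layer; your choice $\chi_\delta=\phi(\psi_0/\delta)$ based on sublevel sets of $\psi_0$ is a convenient variant of the same idea, and your Laplace estimate of the denominator is a mild refinement that is not needed for the $h\log\lambda$ limit. One wording quibble: when you say ``keep the weight in the numerator'' you still need, at the moment you invoke the unweighted identity $4\int|\partial_{\bar z}w|^2=\int|\nabla w|^2$, the trivial bound $e^{-2\psi_0/h}\ge 1$ that you recorded just before; make that step explicit.
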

In the non-simply connected case, such formulation could be wrong. The result 
 could depend on the circulations of the magnetic potential along 
the different components of the boundary. The aim of this paper is  to analyze
this problem and to show that in the semi-classical limit the circulation effects disappear. Our main result is
\begin{theorem} \label{th1.3nsc}
 If $B(x) >0$,  $\Omega$ is connected, and if $\psi_0$ is the solution of 
\[
\Delta \psi_0 = B(x) \text{ in } \Omega \,,\, \psi_{0/\partial \Omega} =0\,,
\]
then, for any $\Ab$ such that $\curl \Ab = B$,  
\[
\lim_{h\rightarrow 0} h \log  \lambda_{P_-}^D(h,\Ab,B, \Omega) =  2 \inf \psi_0\,.
\]
\end{theorem}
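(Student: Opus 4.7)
The plan is to prove the two one-sided bounds $\limsup_{h\to 0} h\log\lambda_{P_-}^D\le 2\inf\psi_0$ and $\liminf_{h\to 0} h\log\lambda_{P_-}^D\ge 2\inf\psi_0$ separately, based on the factorization used in \cite{HPS}. If $\Ab$ is in Coulomb gauge, $\Ab=\nabla^\perp\tilde\psi$ for some single-valued $\tilde\psi$ satisfying $\Delta\tilde\psi=B$, then
\[
(hD_{x_1}-A_1)u-i(hD_{x_2}-A_2)u=-2ih\,e^{\tilde\psi/h}\partial_z\bigl(e^{-\tilde\psi/h}u\bigr),
\]
and combining $\langle P_-u,u\rangle=4h^2\int e^{2\tilde\psi/h}|\partial_z(e^{-\tilde\psi/h}u)|^2$ with the identity $\int|\nabla v|^2=4\int|\partial_z v|^2$ on $H^1_0$ (integration by parts on the cross term) and the Dirichlet Poincar\'e inequality applied to $v=e^{-\tilde\psi/h}u$ yields the master inequality
\[
\langle P_-u,u\rangle\ge h^2\lambda^D(\Omega)\,e^{-2\Osc(\tilde\psi)/h}\|u\|^2,\qquad \Osc(\tilde\psi)=\sup_\Omega\tilde\psi-\inf_\Omega\tilde\psi.
\]
For $\tilde\psi=\psi_0$ (the simply connected case) one has $\Osc(\psi_0)=-\inf\psi_0$ and this recovers Theorem~\ref{th1.3}.

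For the lower bound, the difficulty is that the circulations $\alpha_j$ of $\Ab$ around the interior boundary components $C_j$ of $\partial\Omega$ are gauge invariants; for a generic $\Ab$ the single-valued potential $\tilde\psi$ compatible with those circulations has $\Osc(\tilde\psi)\gg-\inf\psi_0$. I would exploit the \emph{large} gauge transformations $u\mapsto e^{if/h}u$ with $f$ multi-valued but $e^{if/h}$ single-valued (i.e.\ $f$ with periods in $2\pi h\,\mathbb Z$): these unitaries conjugate $P_-^{\Ab}$ to $P_-^{\Ab-\nabla f}$, so $\lambda_{P_-}^D$ is $2\pi h$-periodic in each $\alpha_j$. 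I may thus replace $\Ab$ by a gauge-equivalent potential whose circulations satisfy $|\alpha_j-\alpha_j^0|\le\pi h$, where $\alpha_j^0$ are those of $\nabla^\perp\psi_0$. An ordinary gauge to Coulomb then produces $\tilde\psi=\psi_0+h_\Delta$ with $h_\Delta$ harmonic (and, say, constant on each $C_k$, zero on the outer boundary); those boundary constants solve a linear system whose matrix is a positive-definite Dirichlet-to-Neumann operator, so they are $O(h)$ and the maximum principle gives $\|h_\Delta\|_\infty=O(h)$. Hence $\Osc(\tilde\psi)=-\inf\psi_0+O(h)$ and $\lambda_{P_-}^D\ge c\,h^2 e^{2\inf\psi_0/h}$, which is exactly $\liminf h\log\lambda_{P_-}^D\ge 2\inf\psi_0$.

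For the upper bound, I would use that although $\Ab$ is not globally gauge-equivalent to $\nabla^\perp\psi_0$, it is so on any simply connected open set $U\subset\Omega$: the closed $1$-form $\Ab-\nabla^\perp\psi_0$ has a smooth primitive $f$ on $U$. Pick an interior minimizer $x_0$ of $\psi_0$ (which exists and satisfies $\psi_0(x_0)<0$ by the strong maximum principle, since $B>0$ and $\psi_0|_{\partial\Omega}=0$), take $U=B(x_0,r)\subset\Omega$, and try $u=e^{if/h}e^{\psi_0/h}\chi$ with $\chi\in C_c^\infty(B(x_0,r))$ equal to $1$ on $B(x_0,r/2)$. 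Undoing the local gauge and applying the factorization with $\tilde\psi=\psi_0$ on $U$ reduces the Rayleigh quotient to
\[
\frac{\langle P_-u,u\rangle}{\|u\|^2}=\frac{4h^2\int e^{2\psi_0/h}|\partial_z\chi|^2}{\int e^{2\psi_0/h}|\chi|^2}\le C\frac{h^2}{r^2}\exp\Bigl(\frac{2Cr^2}{h}\Bigr)\,e^{2\inf\psi_0/h},
\]
using $\psi_0\le\inf\psi_0+Cr^2$ on $B(x_0,r)$ (Taylor at a critical point) in the numerator and $\psi_0\ge\inf\psi_0$ in the denominator. Choosing $r=h^{1/2}$ makes the prefactor polynomial in $h$ and the excess exponent $O(1)$, so $\limsup h\log\lambda_{P_-}^D\le 2\inf\psi_0$.

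The main difficulty is clearly the lower bound: the factorization estimate is very sensitive to the gauge and degrades as soon as the circulations of $\Ab$ deviate from those of $\nabla^\perp\psi_0$. The crucial input is the semi-classical periodicity of the spectrum with period $2\pi h$ in each circulation, which absorbs the topological freedom and reduces the problem to an $O(h)$ harmonic perturbation of $\psi_0$; the $O(h)$ corrections then disappear in the $h\log(\cdot)$ limit, which is precisely the statement that the circulation effects die out as $h\to 0$, as announced in the theorem.
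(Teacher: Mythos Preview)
Your lower bound is essentially the paper's argument: the master inequality with $\Osc(\tilde\psi)$, the $2\pi h$-periodicity of the spectrum in each circulation via large gauge transformations, the affine map from circulations to boundary traces (your Dirichlet-to-Neumann matrix is the paper's matrix $M$), and the maximum principle to conclude $\Osc(\tilde\psi)=-\inf\psi_0+O(h)$. That part is fine.

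The upper bound, however, has a genuine gap. In your Rayleigh quotient
\[
\frac{4h^2\int e^{2\psi_0/h}|\partial_z\chi|^2}{\int e^{2\psi_0/h}|\chi|^2}
\]
both integrals live on $B(x_0,r)$, where $\psi_0$ varies only between $\inf\psi_0$ and $\inf\psi_0+Cr^2$. Hence the weight $e^{2\psi_0/h}$ is, up to a factor $e^{O(r^2)/h}$, the \emph{same} constant $e^{2\inf\psi_0/h}$ in numerator and denominator, and it cancels: carrying your own bounds through gives
\[
\frac{4h^2\int e^{2\psi_0/h}|\partial_z\chi|^2}{\int e^{2\psi_0/h}|\chi|^2}
\le \frac{C h^2\,e^{2(\inf\psi_0+Cr^2)/h}}{c\,r^2\,e^{2\inf\psi_0/h}}
=\frac{C'h^2}{r^2}\,e^{2Cr^2/h},
\]
with no surviving factor $e^{2\inf\psi_0/h}$. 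With $r=h^{1/2}$ this yields only $\lambda_{P_-}^D=O(h)$, hence $\limsup h\log\lambda_{P_-}^D\le 0$, not $2\inf\psi_0$. The point is that the exponential gain in the quotient comes from the \emph{gap} between the value of $\psi$ where the mass of $u$ concentrates (near $x_0$) and where $\nabla\chi$ lives; if you localise $\chi$ near $x_0$, that gap is $O(r^2)$, not $-\inf\psi_0$.

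The paper's upper bound therefore does \emph{not} use a local gauge on a small ball. It reuses the same device as in the lower bound: pick $\alpha(h)\in\mathbb Z^k$ with $|\Phi+2\pi h\alpha(h)-\Phi_0|\le Ch$, pass by gauge invariance to $\Ab^{\Phi_h^{\mathrm{new}}}$ whose generating function $\psi_{p(\Phi_h^{\mathrm{new}})}$ is $O(h)$-close to $\psi_0$ on all of $\overline\Omega$, and then take a \emph{global} trial state $u=e^{-\psi/h}v_\eta$ with $v_\eta\in C_c^\infty(\Omega)$ equal to $1$ outside an $\eta$-neighbourhood of $\partial\Omega$. Now $\nabla v_\eta$ is supported where $\psi\approx 0$ (the boundary value), while $\|u\|^2$ is dominated by the region near $x_0$ where $\psi\approx\inf\psi_0$; this produces the genuine factor $e^{2(\inf\psi_0+O(h))/h}$ in the quotient (Proposition~\ref{prop6.2} with $\min_{\partial\Omega}\psi=O(h)$), and $\limsup h\log\lambda_{P_-}^D\le 2\inf\psi_0$ follows after letting $\eta\to 0$.
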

The proof will use strongly the gauge invariance of the problem.

\begin{remark}
One can of course  think of getting an upper bound  by considering  a simply connected domain $\widehat \Omega\subset \Omega$ and use monotonicity arguments (with respect to the inclusion of domains). The question of an  optimal $\widehat \Omega$ is in this perspective relevant. Similar questions arise for the analysis of Aharonov--Bohm operators in~\cite{HHOHOO}. We will not continue in this direction which, in the semi-classical limit, does not lead to optimal results.
\end{remark}

The paper is organized as follows. In Section~\ref{s2}, we recall some basic results on magnetic potentials in the non-simply connected case.
In Section~\ref{s3}, we give the proof of Theorem~\ref{thmEKPNSC}, following the strategy of Ekholm--Kova\v{r}\'ik--Portmann. In Section~\ref{s4}, we discuss gauge invariance and a first application for improving Theorem~\ref{thmEKPNSC}. In Section~\ref{s7}, we present a very explicit form of the Hodge-De~Rham theory for twodimensional domains which is then used for the control of the oscillation of the generating function of the magnetic potential. We then implement the gauge invariance. Section~\ref{s6} is devoted to the upper bound showing the asymptotic optimality of the lower bound.  Finally, in Section~\ref{sannulus} the case of the annulus is analyzed in more details together with numerical computations.

\section{On magnetic potentials in not (necessarily) simply connected domains}\label{s2}
In this section, we  explore the question of existence and uniqueness for a magnetic vector potential with given circulations when the domain is not simply connected. 

\subsection{Canonical choice of the magnetic potential}\label{ss2.1}
 Following what is done for example in superconductivity (see~\cite{FH}), given some magnetic potential $\Ab$  in $\Omega$ satisfying \eqref{eq:0}, we can 
after a gauge transformation assume that $\Ab $ satisfies, in addition
to~\eqref{eq:0},
\begin{equation}\label{eq:5a}
 \, \Div \Ab = 0\text{ in } \Omega\,;\quad \Ab \cdot \nu =0 \text{ on } \partial \Omega\,.
\end{equation}
Here, and in the continuation, $\nu$ denotes a unit normal to $\partial\Omega$,
pointing into the domain $\Omega$. If~\eqref{eq:5a} is not satisfied for say $\Ab_0$ satisfying \eqref{eq:0}, we can construct 
\[
\Ab = \Ab_0 + \nabla \phi
\]
satisfying in addition \eqref{eq:5a}, by choosing $\phi$ as a solution of
\[
-\Delta \phi = \Div \Ab_0\text{ in } \Omega\,;\quad  \nabla \phi\cdot \nu = - \Ab_0\cdot \nu \text{ on } \partial \Omega\,,
\]
which is unique if we add the condition $\int_\Omega \phi (x)\, dx =0\,$.

\subsection{ The role of the circulations}\label{ss2.2}
The second point to observe is the following
\begin{proposition} Let $\Omega$ be an open connected set with $k$ holes 
$\Omega_j$, $j=1,\ldots,k$. Given $B$ in $C^\infty (\bar \Omega)$ and $k$ 
real numbers $\Phi_j$, $j=1,\ldots,k$,
then there exists a unique $\Ab$ satisfying \eqref{eq:0}, \eqref{eq:5a} and
\[
\int_{\partial \Omega_j} \Ab = \Phi_j,\quad j=1,\ldots,k.
\]
\end{proposition}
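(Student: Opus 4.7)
My plan for this proposition splits into existence and uniqueness. For existence, I will first construct a distinguished potential realizing $B$ together with \eqref{eq:5a}, and then correct the circulations by adding a linear combination of $k$ basis harmonic vector fields. The distinguished potential comes from solving $\Delta \psi_0 = B$ in $\Omega$ with $\psi_0 = 0$ on $\partial\Omega$ (elliptic regularity gives $\psi_0 \in C^\infty(\bar\Omega)$) and setting $\Ab_0 = (-\partial_{x_2}\psi_0, \partial_{x_1}\psi_0)$. A direct computation then gives $\curl \Ab_0 = \Delta \psi_0 = B$, $\Div \Ab_0 = 0$, and $\Ab_0 \cdot \nu = 0$ because the tangential derivative of $\psi_0$ vanishes along each component of $\partial\Omega$. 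With $\Phi_j^0 := \int_{\partial \Omega_j} \Ab_0$, the problem is reduced to finding a curl-free, divergence-free vector field tangent to $\partial \Omega$ whose circulation around $\partial \Omega_j$ equals $\Phi_j - \Phi_j^0$.

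The key ingredient is a canonical basis $(\Ab_j)_{j=1}^k$ of such fields, built from stream functions. For each $m \in \{1, \ldots, k\}$ let $w_m$ be the harmonic function in $\Omega$ with $w_m = \delta_{lm}$ on $\partial \Omega_l$ for $l = 1, \ldots, k$ and $w_m = 0$ on the outer boundary, and form the capacity matrix $M_{lm} = \int_{\partial \Omega_l} \partial_\nu w_m \, \dd s$. Integration by parts yields $M_{lm} = \pm \int_\Omega \nabla w_l \cdot \nabla w_m \, \dd x$, so $M$ is symmetric and positive definite: $c^T M c = \pm \int_\Omega \bigl|\nabla \sum_m c_m w_m\bigr|^2 \dd x$ vanishes only when $\sum_m c_m w_m$ is constant on $\Omega$, which forces $c = 0$ from the prescribed boundary values. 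Hence there are unique constants $c_{mj}$ such that $v_j := \sum_m c_{mj} w_m$ satisfies $\int_{\partial \Omega_l} \partial_\nu v_j \, \dd s = \delta_{lj}$, and I set $\Ab_j = (-\partial_{x_2} v_j, \partial_{x_1} v_j)$. By construction $\Ab_j$ is closed, co-closed, tangent to $\partial \Omega$ (because $v_j$ is constant on every boundary component), and has the prescribed circulations. Then $\Ab := \Ab_0 + \sum_{j=1}^k (\Phi_j - \Phi_j^0) \Ab_j$ satisfies all the required conditions.

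For uniqueness, let $\mathbf{W} = \Ab^{(1)} - \Ab^{(2)}$ be the difference of two solutions. Then $\curl \mathbf{W} = 0$ and the periods of $\mathbf{W}$ vanish around every $\partial \Omega_j$; since the cycles $\partial \Omega_j$ generate $H_1(\Omega; \mathbb{Z})$, the closed form $\mathbf{W}$ is exact, i.e.\ $\mathbf{W} = \nabla \phi$ for a single-valued $\phi \in C^\infty(\bar \Omega)$. The remaining conditions $\Div \mathbf{W} = 0$ and $\mathbf{W} \cdot \nu = 0$ become $\Delta \phi = 0$ with $\partial_\nu \phi = 0$, forcing $\phi$ to be constant and hence $\mathbf{W} = 0$. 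The step I expect to be the main obstacle is the invertibility of the capacity matrix $M$: this is essentially the Hodge-theoretic statement that the space of harmonic fields on $\Omega$ with vanishing normal trace has dimension equal to the first Betti number, and the positive-definite energy identity above is the cleanest route to proving it.
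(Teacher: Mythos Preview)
Your proof is correct. The uniqueness argument is essentially identical to the paper's Lemma~\ref{Lemma2.2}. For existence, however, you take a genuinely different route from the paper's Lemma~\ref{lemma2.3}. The paper extends $B$ smoothly to the simply connected hull $\widetilde\Omega$, adds bump magnetic fields $\beta_j$ supported in each hole $\Omega_j$ so that the total flux through $\Omega_j$ equals $\Phi_j$, takes any potential for this extended field on $\widetilde\Omega$, restricts to $\Omega$, and then gauge-corrects to enforce \eqref{eq:5a}; Stokes' theorem guarantees the circulations come out right. Your approach instead works intrinsically in $\Omega$ via stream functions: the Dirichlet solution $\psi_0$ furnishes a base potential, and harmonic functions $w_m$ with Kronecker boundary data span the correction space, with the capacity matrix $M$ governing the circulations. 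This is in fact exactly the machinery the paper sets up later in Section~\ref{s7} (your $w_m$ are the paper's $\theta_j$, your $M$ is the paper's $M$), where invertibility of $M$ is deduced from the uniqueness Lemma~\ref{Lemma2.2} rather than from the energy identity you give. Your route is more self-contained and anticipates the later analysis; the paper's extension argument is quicker here but postpones the Hodge-theoretic content.
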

The proof is a consequence of the two lemmas below, giving separately 
uniqueness and existence.
\begin{lemma}\label{Lemma2.2}
If $\Ab$ and $\tilde \Ab$ satisfy \eqref{eq:0}  and \eqref{eq:5a}  with same circulations along the boundaries $\partial \Omega_j$, $j=1,\cdots,k$, then $\Ab=\tilde \Ab\,$.
\end{lemma}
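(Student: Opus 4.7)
The plan is to set $W := \Ab - \tilde\Ab$ and show that $W\equiv 0$. By linearity, $W$ satisfies $\curl W = 0$ and $\Div W = 0$ in $\Omega$, $W\cdot\nu=0$ on $\partial\Omega$, and $\int_{\partial\Omega_j} W = 0$ for $j=1,\dots,k$. The strategy is to reduce the problem to a Neumann problem for the Laplacian, but this requires first upgrading $W$ from a closed $1$-form to an exact one.

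First I would observe that the $k$ inner boundary loops $\partial\Omega_j$ generate $H_1(\Omega;\mathbb{R})$, while the outer boundary component is homologous to their sum: by Stokes' theorem applied to the closed form $W\,\dd x_1 + W\cdot\textup{(second comp.)}$ (or, equivalently, to $\curl W=0$),
\[
\int_{\partial\Omega_{\mathrm{out}}} W \;-\; \sum_{j=1}^{k}\int_{\partial\Omega_j} W \;=\; \int_\Omega \curl W\,\dd x \;=\; 0,
\]
so the circulation of $W$ along \emph{every} connected component of $\partial\Omega$ vanishes. Consequently all periods of the closed $1$-form associated to $W$ are zero, and therefore $W$ is exact on $\Omega$: there exists a single-valued $\phi\in C^\infty(\bar\Omega)$ with
\[
W = \nabla\phi \quad\text{in }\Omega.
\]

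Next I would substitute this representation into the remaining conditions. From $\Div W=0$ we get $\Delta\phi=0$ in $\Omega$, and from $W\cdot\nu=0$ on $\partial\Omega$ we get $\partial\phi/\partial\nu=0$ on $\partial\Omega$. Thus $\phi$ solves a homogeneous Neumann problem for the Laplacian on the connected domain $\Omega$, and standard uniqueness (multiply by $\phi$, integrate by parts, use connectedness) yields that $\phi$ is constant. Hence $W=\nabla\phi=0$ and $\Ab=\tilde\Ab$, as required.

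The only genuinely non-routine step is the exactness argument: one must use that the inner boundary loops span $H_1(\Omega;\mathbb{R})$ together with the Stokes identity above to conclude that zero circulation on the $k$ inner boundaries forces zero circulation on \emph{all} closed curves in $\Omega$. Once this topological input is in place, the rest is a straightforward energy/integration-by-parts argument for the Neumann Laplacian. This is presumably why the authors record uniqueness and existence as two separate lemmas, existence being the place where the Hodge-De~Rham machinery announced for Section~\ref{s7} will do the heavier work.
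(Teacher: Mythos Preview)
Your proof is correct and follows essentially the same route as the paper's: write $\Ab-\tilde\Ab=\nabla\phi$, observe that $\Delta\phi=0$ with $\partial_\nu\phi=0$ on $\partial\Omega$, and conclude $\phi$ is constant by connectedness. The only difference is that the paper asserts exactness of $\Ab-\tilde\Ab$ in a single unjustified sentence, whereas you spell out the topological reason (the inner boundary loops span $H_1(\Omega;\mathbb R)$, so vanishing circulations there force all periods to vanish); your closing speculation about the existence lemma is slightly off, since the paper proves existence by an elementary extension-and-restriction construction rather than by Hodge--De~Rham machinery.
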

\begin{proof}
If $\Ab$ and $\tilde \Ab$ correspond to the same $B$ and have same circulations 
along $\partial \Omega_j$, $j=1,\ldots,k$, they differ by a gradient $\nabla \phi$:
\[
\Ab - \tilde \Ab= \nabla \phi\,.
\] 
This gradient should satisfy $\Div \nabla \phi= \Delta \phi=0$ and 
$\partial_\nu \phi =0$ on $\partial \Omega$. Hence $\phi$ 
should be constant in $\Omega$ (we have indeed assumed that $\Omega$ is connected). Hence $\nabla \phi=0$ and the lemma follows.
\end{proof}

The existence is obtained through

\begin{lemma}\label{lemma2.3}
Given $B$ in $C^\infty (\overline{ \Omega})$ and $k$ real numbers $\Phi_j$, $j=1\ldots,k$, then there exists $\Ab$ satisfying \eqref{eq:0}, \eqref{eq:5a} and
\[
\int_{\partial \Omega_j} \Ab = \Phi_j,\quad j=1,\ldots,k.
\]
\end{lemma}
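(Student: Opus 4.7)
The strategy is to seek $\Ab$ in the form of a rotated gradient $\Ab = \nabla^\perp\psi$ with $\nabla^\perp = (-\partial_{x_2},\partial_{x_1})$ and a single-valued $\psi\in C^\infty(\overline{\Omega})$. Such an $\Ab$ automatically satisfies $\Div\Ab=0$, while $\curl\Ab=B$ becomes $\Delta\psi=B$ in $\Omega$. A short computation using an orthonormal frame $(\tau,\nu)$ on $\partial\Omega$ shows that $\nabla^\perp\psi\cdot\nu = -\partial_\tau\psi$, so the boundary condition $\Ab\cdot\nu=0$ in~\eqref{eq:5a} is equivalent to $\psi$ being constant on each connected component of $\partial\Omega$. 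Denoting the components by $\partial\Omega_0$ (outer) and $\partial\Omega_1,\ldots,\partial\Omega_k$ (the hole boundaries), I normalize $\psi\equiv 0$ on $\partial\Omega_0$ and let $c_j := \psi|_{\partial\Omega_j}$, $j=1,\ldots,k$, be constants to be determined.

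Next, I would split the construction additively: let $\psi_0\in H^1_0(\Omega)$ be the Dirichlet solution of $\Delta\psi_0=B$, and for each $j=1,\ldots,k$ let $\psi_j$ be the harmonic function in $\Omega$ with boundary values $\psi_j|_{\partial\Omega_i}=\delta_{ij}$ for $i=1,\ldots,k$ and $\psi_j|_{\partial\Omega_0}=0$ (the harmonic measure of the hole $\Omega_j$). Setting
\[
\psi = \psi_0 + \sum_{j=1}^{k} c_j \psi_j,\qquad \Ab := \nabla^\perp\psi,
\]
then for every choice of $(c_1,\ldots,c_k)$ the field $\Ab$ satisfies \eqref{eq:0} and \eqref{eq:5a}. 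Using $\nabla^\perp\psi\cdot\tau=\partial_\nu\psi$ along each boundary component, the circulations read
\[
\int_{\partial\Omega_i}\Ab = \int_{\partial\Omega_i}\partial_\nu\psi_0\,\dd s + \sum_{j=1}^{k} c_j\, M_{ij},\qquad M_{ij} := \int_{\partial\Omega_i}\partial_\nu\psi_j\,\dd s.
\]

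The crux is to prove that the $k\times k$ period matrix $M$ is invertible, so that the linear system $\sum_j M_{ij}c_j = \Phi_i - \int_{\partial\Omega_i}\partial_\nu\psi_0\,\dd s$ admits a (unique) solution. This follows from Green's identity: since each $\psi_j$ is harmonic in $\Omega$ and $\psi_i$ takes the boundary values $\delta_{il}$ on $\partial\Omega_l$,
\[
M_{ij} = \int_{\partial\Omega}\psi_i\,\partial_\nu\psi_j\,\dd s = \int_\Omega \nabla\psi_i\cdot\nabla\psi_j\,\dd x,
\]
so $M$ is the Gram matrix of the linearly independent functions $\psi_1,\ldots,\psi_k$ in the Dirichlet inner product, hence symmetric positive definite. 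Solving for the $c_j$ yields the desired $\Ab$.

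The only delicate point is the bookkeeping of signs and orientations in the identity $\int_{\partial\Omega_i}\Ab = \int_{\partial\Omega_i}\partial_\nu\psi\,\dd s$: the paper's convention is that $\nu$ points \emph{into} $\Omega$, whereas $\partial\Omega_i$ is naturally oriented as the boundary of the hole $\Omega_i$, and the two conventions give opposite signs. Once these are fixed consistently the argument above is routine, and the existence statement of Lemma~\ref{lemma2.3} follows; uniqueness was already established in Lemma~\ref{Lemma2.2}.
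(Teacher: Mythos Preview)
Your argument is correct, but it takes a genuinely different route from the paper's own proof of Lemma~2.3. The paper proceeds by extension: it extends $B$ smoothly to the simply connected hull $\widetilde\Omega$, then adds compactly supported bump fields $\beta_j$ inside each hole $\Omega_j$ so that the total flux of $\widehat B=\widetilde B+\sum_j\beta_j$ through $\Omega_j$ equals the prescribed $\Phi_j$; any magnetic potential $\widehat\Ab$ for $\widehat B$ on $\widetilde\Omega$ then restricts to $\Omega$ with the right circulations, and a final gradient correction enforces~\eqref{eq:5a}. Your construction, by contrast, works entirely inside $\Omega$ via the ansatz $\Ab=\nabla^\perp\psi$ with $\psi=\psi_0+\sum_j c_j\psi_j$, and reduces existence to the invertibility of the period matrix $M$.

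What is noteworthy is that your harmonic functions $\psi_j$ and your matrix $M$ are precisely the objects $\theta_j$ and $M$ that the paper introduces later, in Section~\ref{s7}, to relate circulations to boundary traces. There the paper proves $M$ invertible by appealing to the uniqueness Lemma~\ref{Lemma2.2}; your Gram-matrix argument via Green's identity is a self-contained alternative that in addition shows $M$ (up to the sign convention you flag) to be symmetric positive definite. So your proof of the lemma effectively front-loads the Section~\ref{s7} machinery and yields the canonical generating function $\psi^\Ab$ of Proposition~\ref{propexist} directly, whereas the paper's extension argument is quicker for the bare existence statement but says nothing structural about~$M$.
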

\begin{proof}
We first extend $B$ to $\widetilde B$ in $C^\infty (\overline{\widetilde \Omega})$  in an arbitrary way. Here $\widetilde \Omega$ is the simply connected hull of~$\Omega$:
\[
\widetilde \Omega = \Omega \cup \Bigl(\bigcup_{j=1}^k \overline {\Omega}_j\Bigr) \,.
\]
Next, we add to this extension $\widetilde B$ a smooth magnetic field $\sum_{j=1}^k \beta_j$ such that
$\supp \beta_j \subset \Omega_j$ for $j=1,\ldots, k$ and such that  the flux of $\widehat B = \widetilde B + \sum \beta_j$ in $\Omega_j$ is $ \Phi_j$. Let $\widehat \Ab$ be some magnetic potential in $\widetilde \Omega$ associated with $\widehat B$. Considering the restriction of $\widehat \Ab$ to $\Omega$ and adding some gradient $\nabla \theta$  (as explained before) to have~\eqref{eq:5a} satisfied, we get the desired $\Ab$.
\end{proof}

\subsection{Canonical generating function of $\Ab$.}

In order to analyze the non-simply connected case, we first rediscuss the previous point and then recall the main points of the analysis in~\cite{EKP,HPS}. 
 \begin{proposition}\label{propexist}
 Let $\Omega$ be an open connected set with $k$ holes $\Omega_j$, $j=1,\ldots,k$. 
 If $\Ab=(A_1,A_2)$ satisfies  \eqref{eq:0} and \eqref{eq:5a}
   then there exists a unique $ \psi=\psi^\Ab $ (which will be called the 
canonical generating function of $\Ab$) such that 
\[
 \psi_{/\partial \widetilde\Omega} =0\,,
\]
 and 
 \begin{equation}\label{eq:5}
A_1 = - \partial_{x_2}  \psi\,,\, A_2 = \partial_{x_1}   \psi\,.
\end{equation}
Here $\widetilde\Omega$ denotes the simply-connected envelope of $\Omega$ 
and $\partial \widetilde\Omega$ its boundary.
 \end{proposition}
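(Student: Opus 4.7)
The plan is to recognize $\psi$ as a stream function for $\Ab$, i.e.\ to solve $d\psi=\omega$ for the rotated $1$-form $\omega:=A_2\,dx_1-A_1\,dx_2$, since the relations $A_1=-\partial_{x_2}\psi$ and $A_2=\partial_{x_1}\psi$ are precisely equivalent to this equality.

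Closedness of $\omega$ on $\Omega$ follows from~\eqref{eq:5a}: one computes $d\omega=-(\partial_{x_1}A_1+\partial_{x_2}A_2)\,dx_1\wedge dx_2=-\Div\Ab\, dx_1\wedge dx_2=0$. Since $\Omega$ is connected with $k$ holes, $H_1(\Omega;\mathbb R)$ is generated by the classes of the loops $\partial\Omega_j$, so $\omega$ will be exact on $\Omega$ provided all $k$ circulations $\int_{\partial\Omega_j}\omega$ vanish. I would compute these by parametrizing $\partial\Omega_j$ counterclockwise by arc length with tangent $\tau=(\dot x_1,\dot x_2)$: the unit normal pointing out of $\Omega_j$ (hence into $\Omega$) is $\nu=(\dot x_2,-\dot x_1)$, and
\[
\int_{\partial\Omega_j}\omega=\int_{\partial\Omega_j}(A_2\dot x_1-A_1\dot x_2)\,ds=-\int_{\partial\Omega_j}\Ab\cdot\nu\,ds=0
\]
by the second part of~\eqref{eq:5a}. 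Hence $\omega=d\psi$ for some $\psi\in C^\infty(\overline\Omega)$, unique up to an additive constant.

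To pin down this constant I would reuse the same computation on each boundary component: on any connected component $\Gamma$ of $\partial\Omega$ the tangential derivative of $\psi$ equals $A_2\dot x_1-A_1\dot x_2=\pm\Ab\cdot\nu=0$, so $\psi$ is constant along $\Gamma$. Requiring this constant value to be $0$ on the outer boundary $\partial\widetilde\Omega$ determines $\psi$ uniquely and produces the announced $\psi^\Ab$; uniqueness is then immediate, since two admissible candidates differ by a global constant forced to vanish by the normalization. The only genuinely delicate point in this plan is the orientation and sign bookkeeping that turns the condition $\Ab\cdot\nu=0$ into the vanishing of $\int_{\partial\Omega_j}\omega$ along the homological generators; once that is settled, the construction is essentially the Poincar\'e lemma on the multiply connected planar domain $\Omega$, complemented by a trivial normalization on the outer boundary.
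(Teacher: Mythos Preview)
Your argument is correct and follows the same route as the paper: define the rotated $1$-form $\omega$, use $\Div\Ab=0$ for closedness, kill the periods around the $\partial\Omega_j$ via $\Ab\cdot\nu=0$, deduce that $\psi$ is constant on each boundary component, and normalize on $\partial\widetilde\Omega$. Your sign and orientation bookkeeping is in fact more explicit than the paper's sketch (and your choice $\omega=A_2\,dx_1-A_1\,dx_2$ is the one consistent with \eqref{eq:5}; the paper's $\omega=-A_2\,dx_1+A_1\,dx_2$ appears to carry a harmless sign typo).
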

\begin{proof}
There is no problem if $\Omega$ is simply connected. It is indeed sufficient to 
solve~\eqref{eq:5a}. For $\Omega$ not being simply connected, the statement is 
maybe less standard (this is a particular case of the Hodge-De Rham theory in the case of a manifold with boundary).  We observe that if $\Div \Ab =0$, then
the $1$-form  $\omega:=  -A_2 dx_1 + A_1 dx_2$ 
is closed (hence locally exact). In order to verify the existence of a
global $ \psi$ such that $d  \psi =\omega$, we have just to make 
sure that the integral of $ \omega$ along
each closed path is zero. This is reduced to the verification  that 
$ \int_{\partial \Omega_j} \omega =0$, which is a consequence  of the property 
that $\Ab\cdot \nu =0$ on $\partial \Omega\,$.  The function   $\psi$ is constant on each component of $\partial \Omega$. 
The uniqueness is obtained by 
imposing $ \psi_{/\partial \widetilde \Omega} =0$.
\end{proof}

We fix some notation. If $\Ab=(A_1,A_2)$ satisfies  \eqref{eq:0} and \eqref{eq:5a}, $\psi^\Ab$ depends only on $B$ and (in the case with $k$ holes) on the circulations $\Phi=(\Phi_1,\cdots,\Phi_k)$. We prefer in the future write $\psi^\Phi$ instead of $\psi^\Ab$. We  write also, for $\psi:=\psi^\Phi$, (or $\psi=\psi_0$ in the simply 
connected case)
\begin{equation}\label{defpsimin}
\psi_{min}:=\inf \psi,\quad
\psi_{max} = \sup \psi,\quad
\Osc ( \psi) := \psi_{max} - \psi_{min}.
\end{equation}
\begin{remark}
Note that by the maximum principle, under assumption that $B>0$ and $\Omega$ is simply connected, we have 
$\psi_0 \leq  0$,  $\inf \psi_0 <0$, and there 
exists at least a point $x_0 \in \overline{  \Omega} $ such that $\psi (x_0)= \inf \psi$. Hence we have
\[
\Osc ( \psi_0) = - \psi_{min} \text{ if } \Omega \text{ simply connected and } B>0\,.
\]
We stress that in the non-simply connected situation $x_0$ could belong to $\partial \Omega$.
\end{remark}

\section{Proof of Theorem \ref{thmEKPNSC}}\label{s3}

To get lower bounds, we consider the quadratic form and make the substitution
\[
u = \exp \Bigl(- \frac{\psi ^\Phi}{h}\Bigr)\,  v\,.
\] 
We have the following identity (see (2.4) in \cite{EKP}) if $\Ab$ satisfies \eqref{eq:0} and  \eqref{eq:5a}:
\begin{equation}\label{eq:pr1}
\| (hD-\Ab) u\|^2 -h \int_\Omega B(x) |u(x)|^2\, dx  
= h^2 \int_\Omega \exp \Bigl(- 2 \frac{\psi^\Phi}{h}\Bigr) |(\partial_{x_1} + i \partial_{x_2}) v|^2\, dx\,.
\end{equation}
We assume now  that $u$ (and consequently  $v$) is in $H_0^1(\Omega)$ and
estimate the quadratic form from below,

\begin{equation*}
\begin{aligned}
\| (hD-\Ab) u\|^2 -h \int_\Omega B(x) |u(x)|^2\, dx 
& \geq h^2 \exp \Bigl(\frac{- 2 \psi_{max}}{h}\Bigr)  \int_\Omega |(\partial_{x_1} + i \partial_{x_2}) v|^2\, dx \\
& \geq h^2 \exp \Bigl(\frac{- 2 \psi_{max}}{h}\Bigr)  \int_\Omega |\nabla  v|^2\, dx\\
& \geq h^2 \exp \Bigl(\frac{- 2 \Osc (\psi^\Phi)}{h}\Bigr)  \lambda^D(\Omega) \int_\Omega |u|^2 dx.
\end{aligned}
\end{equation*}
In terms of the lowest eigenvalue of $P_-$, this inequality for the quadratic 
form implies
\begin{theorem}\label{th2.1}
Assume that $\Omega$ is a bounded connected domain. Then,
for $h>0$,
\[
 \lambda_{P_-}^D(h,\Ab, B,\Omega) \geq h^2  \lambda^D(\Omega) \, \exp \Bigl(\frac{ -2 \Osc(\psi^\Phi)}{h}\Bigr) \,,
\]
when $k>0$ and
\[
 \lambda_{P_-}^D(h,B,\Omega) \geq h^2  \lambda^D(\Omega) \, \exp \Bigl(\frac{ -2 \Osc(\psi_0)}{h}\Bigr) \,,
\]
when $k=0$.
Here, as  before, $\lambda^D(\Omega)$ denotes the ground state energy of the
Dirichlet Laplacian in $\Omega$.
\end{theorem}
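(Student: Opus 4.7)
The plan is to read the conclusion off from the variational characterization of $\lambda_{P_-}^D$ once a pointwise quadratic-form bound is in hand. Specifically, the display immediately above the statement already asserts that for every $u\in H_0^1(\Omega)$
\[
\|(hD-\Ab)u\|^2 - h\int_\Omega B|u|^2\,dx \;\ge\; h^2\,\lambda^D(\Omega)\,\e^{-2\Osc(\psi^\Phi)/h}\,\|u\|^2,
\]
so dividing by $\|u\|^2$ and invoking the min--max principle for the Friedrichs realization of $P_-$ yields the spectral lower bound. The first thing I would do, before any estimate, is therefore confirm that $H_0^1(\Omega)$ is a form core for the Dirichlet realization of $P_-$, so that it suffices to test on this space.

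The substance sits in deriving the quadratic-form inequality itself, which I would organize as follows. First, apply the gauge reduction of Section~\ref{ss2.1} to arrange $\Div\Ab=0$ and $\Ab\cdot\nu=0$ on $\dO$; this unitary gauge change does not alter the spectrum and brings us into the framework of Proposition~\ref{propexist}, giving access to the canonical generating function $\psi^\Phi$. Second, introduce $v$ by $u=\e^{-\psi^\Phi/h}v$ and apply the factorization identity~\eqref{eq:pr1} from \cite{EKP}, which is purely local and therefore insensitive to the topology of $\Omega$. Third, extract the exponential weight pointwise using $\psi^\Phi\le\psi_{max}$. Fourth, replace $|(\partial_{x_1}+\iz\partial_{x_2})v|^2$ by $|\nabla v|^2$ under the integral: expanding yields a cross term which, after integration by parts against the Dirichlet trace of $v$, is real and cancels (this is the one step needing a brief calculation). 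Fifth, apply the Dirichlet--Poincar\'e inequality to produce $\lambda^D(\Omega)\int|v|^2\,dx$. Finally, undo the substitution via $|v|^2=\e^{2\psi^\Phi/h}|u|^2$ and use $\psi^\Phi\ge\psi_{min}$; the two exponential factors combine into $\e^{-2\Osc(\psi^\Phi)/h}$.

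The step I expect to require the most care is the substitution $v=\e^{\psi^\Phi/h}u$ and its inverse, which must be a bounded isomorphism of $H_0^1(\Omega)$. In the non-simply connected case $\psi^\Phi$ is only constant---not zero---on each inner component $\partial \Omega_j$, so one cannot invoke a vanishing trace for $\psi^\Phi$ itself; instead one relies on $u$ carrying the full Dirichlet trace together with smoothness and strict positivity of $\e^{\pm\psi^\Phi/h}$ up to $\dO$ (which in turn follows from elliptic regularity applied to $\Delta\psi^\Phi = B$ with piecewise constant Dirichlet data). Once this is in place, the case $k=0$ is a verbatim specialization with $\psi_0$ in place of $\psi^\Phi$, requiring no separate argument.
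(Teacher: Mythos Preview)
Your proposal is correct and follows essentially the same route as the paper: the substitution $u=\e^{-\psi^\Phi/h}v$, the factorization identity~\eqref{eq:pr1}, pointwise extraction of the exponential weight, the equality $\int_\Omega|(\partial_{x_1}+\iz\partial_{x_2})v|^2=\int_\Omega|\nabla v|^2$ for $v\in H_0^1$, Poincar\'e, and undoing the substitution. The paper presents these steps more tersely (three displayed inequalities) and does not spell out the gauge reduction to~\eqref{eq:5a}, the form-core remark, or the $H_0^1$-isomorphism property of multiplication by $\e^{\pm\psi^\Phi/h}$, but your added justifications are all sound and in the spirit of the argument.
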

This is nothing else than the statement in \cite{EKP} for a specific choice of $\psi=\psi^\Phi$ or $\psi=\psi_0$. This gives Theorem \ref{thmEKPNSC},  with $\epsilon = 2  \Osc (\psi^\Phi)$ or $\epsilon = 2  \Osc (\psi_0)$ in the simply connected case. We note that $\epsilon >0$ when $B$ is not identically $0$. We have shown in~\cite{HPS} that the rate of the 
exponential decay is accurate when $B>0$  ($\epsilon = - 2 \inf \psi_0$) in the case of a simply connected $\Omega$.

We can find, using the maximum principle (see Subsection 7.4 in \cite{HPS}) a  lower  bound for $\psi_{min}$ by using the results obtained in the positive constant magnetic field (see \cite{EKP},\cite{HPS})   for specific  open sets $\widetilde \Omega$ (for example the disk).

\section{ Gauge invariance and first application}  \label{s4}
\subsection{Isospectrality}\label{ss2.4}
In Subsection~\ref{ss2.2} we have discussed the case when the magnetic potential 
corresponds to the same magnetic field and has same circulation. In the non-simply connected case, it is important to have in mind  the following proposition (see for example Proposition~2.1.3 in \cite{FH})
\begin{proposition}\label{gauge}
Suppose that $\Omega \subset \R^2$  is bounded and connected, that  $\Ab
\in C^1(\overline{\Omega})$, $\widetilde \Ab
\in C^1(\overline{\Omega})$. 
If  $\Ab$  and $\widetilde \Ab$ satisfy the following two conditions 
\begin{align}\label{betanul}
 \curl\Ab = \curl\widetilde \Ab  \;,
\end{align}
and
\begin{align}
\label{quantiz}
 \frac{1}{2\pi h} \int_{\gamma} (\Ab -\widetilde \Ab)   \in \mathbb Z 
\end{align}
 on any closed path $\gamma$ in $\Omega$,
 then the associated Dirichlet realizations of the  Schr\"odinger magnetic operators $ (hD-\Ab)^2 + V$ and $(hD-\widetilde \Ab)^2 + V $ are unitary equivalent.
\end{proposition}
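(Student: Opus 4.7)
The plan is to exhibit an explicit unitary gauge transformation of the form $U_\phi u = e^{\iz\phi/h} u$ that intertwines the two magnetic operators, where $\phi$ is a (possibly multi-valued) primitive of the difference $\Ab - \widetilde \Ab$. First I would set $\mathbf{C} := \Ab - \widetilde\Ab$, note that by \eqref{betanul} the $1$-form $\omega := C_1\, \dd x_1 + C_2\, \dd x_2$ is closed on $\Omega$, hence locally exact. Fix a base point $x_0 \in \Omega$ and, for each $x \in \Omega$, set
\[
\phi(x) := \int_{x_0}^x \omega,
\]
taken along any piecewise smooth path from $x_0$ to $x$ inside $\Omega$. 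Two different paths $\gamma_1,\gamma_2$ joining $x_0$ to $x$ differ, in homology, by a closed path $\gamma$, so the ambiguity of $\phi(x)$ is exactly $\int_\gamma \omega$, which takes values in a subgroup of $\R$ generated by the periods along the boundary cycles $\partial \Omega_j$.

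The key point, and the only use of hypothesis \eqref{quantiz}, is that the map $x \mapsto e^{\iz \phi(x)/h}$ is nevertheless globally well-defined and smooth on $\overline{\Omega}$: indeed the quantization condition asserts that $\phi$ is single-valued modulo $2\pi h\, \mathbb{Z}$, which is precisely what is needed for the exponential to be unambiguous. Define the multiplication operator
\[
U_\phi : L^2(\Omega) \to L^2(\Omega),\qquad U_\phi u := e^{\iz \phi/h}\, u.
\]
Since $|e^{\iz \phi/h}| = 1$, $U_\phi$ is unitary, it maps $H^1_0(\Omega)$ onto itself (the Dirichlet boundary condition is preserved, as is the form domain of the magnetic Schr\"odinger operator), and it commutes with multiplication by $V$.

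Next I would perform the pointwise intertwining calculation: for any $u \in C^\infty(\overline{\Omega})$ and $j=1,2$, a direct computation using $\nabla \phi = \mathbf{C} = \Ab - \widetilde \Ab$ gives
\[
(hD_{x_j} - A_j)\bigl( e^{\iz \phi/h} u \bigr) = e^{\iz \phi/h}\, (hD_{x_j} - \widetilde A_j)\, u .
\]
Iterating this identity yields $(hD-\Ab)^2 U_\phi = U_\phi (hD-\widetilde\Ab)^2$, and since $V$ commutes with $U_\phi$ we obtain
\[
U_\phi^{-1}\bigl( (hD-\Ab)^2 + V \bigr) U_\phi = (hD-\widetilde\Ab)^2 + V
\]
as operators on the common form core $C_c^\infty(\Omega)$. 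By density and the closedness of both quadratic forms, this unitary equivalence extends to the Dirichlet self-adjoint realizations, which is the stated conclusion.

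The only genuine obstacle is the single-valuedness of $e^{\iz \phi/h}$ on the multiply connected domain $\Omega$; everything else is a routine gauge computation. This is settled exactly by \eqref{quantiz}, since the periods of $\omega$ along a basis of cycles are the integrals $\int_{\partial \Omega_j}(\Ab - \widetilde\Ab)$, which by hypothesis lie in $2\pi h\, \mathbb{Z}$, so the exponential is unambiguous and smooth on $\overline{\Omega}$.
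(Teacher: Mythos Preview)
Your argument is correct and is the standard gauge-transformation proof: construct a local primitive $\phi$ of the closed form $\Ab-\widetilde\Ab$, use the quantization condition \eqref{quantiz} to make $e^{\iz\phi/h}$ globally single-valued, and check the elementary intertwining identity $(hD-\Ab)U_\phi = U_\phi(hD-\widetilde\Ab)$. The paper itself does not give a proof of this proposition; it simply cites Proposition~2.1.3 in \cite{FH}, and what you have written is precisely the argument one finds there. So there is nothing to compare: your proof is the expected one, and it is complete.
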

This can in particular be applied to the case of the Pauli operator  with $V=\pm h B$.

\subsection{First application}
The lower bound in Theorem \ref{th2.1} can be improved by observing that, by  Proposition \ref{gauge}, one can optimize $\Osc (\psi^\Phi)$  over the 
$\Phi_j$ modulo $2\pi h \mathbb Z $.  

\begin{theorem}\label{th2.1a}
Assume that $\Omega$ is a bounded connected domain, with $k$ holes ($k>0$).
For $h>0$,  
\[
 \lambda_{P_-}^D(h,\Ab, B,\Omega) \geq h^2  \lambda^D(\Omega) \, \exp \frac{- 2}{h} \left(\inf_{\alpha \in \mathbb Z^n}  \Osc(\psi^{\Phi + 2\pi \alpha h}) \right)  \,.
\]
\end{theorem}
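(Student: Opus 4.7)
The plan is to combine Theorem~\ref{th2.1} with the gauge invariance in Proposition~\ref{gauge}. The underlying point is that the lower bound from Theorem~\ref{th2.1} is sensitive to the choice of circulations $\Phi$ through $\Osc(\psi^\Phi)$, whereas the spectrum of the Pauli operator is insensitive to changes of circulation by integer multiples of $2\pi h$. Hence we are free to minimise the right-hand side of Theorem~\ref{th2.1} over the lattice $2\pi h \mathbb{Z}^k$, where $k$ is the number of holes.

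More precisely, fix $\alpha\in\mathbb{Z}^k$. By Lemma~\ref{lemma2.3}, there exists a (unique, by Lemma~\ref{Lemma2.2}) magnetic potential $\widetilde\Ab$ satisfying \eqref{eq:0} and \eqref{eq:5a} whose circulations are
\[
\int_{\partial\Omega_j}\widetilde\Ab = \Phi_j + 2\pi h\,\alpha_j,\qquad j=1,\ldots,k.
\]
I want to apply Proposition~\ref{gauge} to $\Ab$ and $\widetilde\Ab$. The first hypothesis is immediate since $\curl\Ab=\curl\widetilde\Ab = B$. For the second, the form $\Ab-\widetilde\Ab$ is closed, so its period along any closed loop in $\Omega$ depends only on the homology class of the loop. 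Since $H_1(\Omega;\mathbb{Z})$ is generated by the boundary cycles $\partial\Omega_j$, on which
\[
\frac{1}{2\pi h}\int_{\partial\Omega_j}(\Ab-\widetilde\Ab) = -\alpha_j \in \mathbb{Z},
\]
the quantisation condition \eqref{quantiz} is satisfied on every closed loop in~$\Omega$.

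By Proposition~\ref{gauge}, the Dirichlet realizations of $P_-(h,\Ab,B)$ and $P_-(h,\widetilde\Ab,B)$ are unitarily equivalent, hence share the same ground state energy:
\[
\lambda_{P_-}^D(h,\Ab,B,\Omega) = \lambda_{P_-}^D(h,\widetilde\Ab,B,\Omega).
\]
Applying Theorem~\ref{th2.1} to $\widetilde\Ab$, whose canonical generating function is by definition $\psi^{\Phi+2\pi h\alpha}$, yields
\[
\lambda_{P_-}^D(h,\Ab,B,\Omega) \geq h^2\,\lambda^D(\Omega)\,\exp\!\Bigl(\tfrac{-2}{h}\,\Osc(\psi^{\Phi+2\pi h\alpha})\Bigr).
\]
Since $\alpha\in\mathbb{Z}^k$ was arbitrary, taking the supremum of the right-hand side (equivalently, the infimum of $\Osc(\psi^{\Phi+2\pi h\alpha})$) over $\alpha$ gives the claimed inequality.

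The only non-trivial point is the verification that the circulation condition \eqref{quantiz} holds on \emph{every} closed loop (not just on the generating cycles $\partial\Omega_j$), which is what makes Proposition~\ref{gauge} applicable. This reduces, as above, to the fact that $\Ab-\widetilde\Ab$ is a closed $1$-form on $\Omega$, so that its periods are topological invariants determined by the values on the generators of $H_1(\Omega;\mathbb{Z})$. Everything else is a direct quotation of earlier results.
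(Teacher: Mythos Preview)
Your proof is correct and follows exactly the approach the paper indicates: combine the lower bound of Theorem~\ref{th2.1} with the gauge invariance of Proposition~\ref{gauge} to optimise over $\Phi_j$ modulo $2\pi h\mathbb{Z}$. The paper states this in a single sentence; you have simply spelled out the details, including the verification that the quantisation condition \eqref{quantiz} holds on every closed loop via the homology of $\Omega$.
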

Hence, it remains to analyze the quantity:
\[
\delta(\Phi,h) = \left(\inf_{\alpha \in \mathbb Z^n}  \Osc(\psi^{\Phi + 2\pi \alpha h}) \right)  - \Osc (\psi_0)\,.
\]

\section{On the links between the circulations and the restrictions of $\psi$ at the boundary}\label{s7}

\subsection{On the links}
To treat the non  simply connected case more concretely, we relate  the values $ p_i$  of $\psi$ on the different components $\partial \Omega_i$ (which we below assume oriented counterclockwise)  of the boundary with the circulations $\Phi_j$ in each hole $\Omega_i$. We will see that this problem, which is usually treated in the Hodge--de Rham-theory for problems with boundary,  can in our situation be reformulated
as the problem of invertibility of a certain linear map.

The starting point is to construct $k$ functions $\theta_j$ as the solutions
of
\[
\Delta \theta_j = 0 \,,\, (\theta_j)_{/\partial\Omega_j} =1\,,\, (\theta_j)_{/\partial\Omega_{i}}=0 \text{ for } i\neq j\,.
\]
Note that by the Maximum principle
\begin{equation}\label{conttheta}
0 \leq \theta_j \leq 1\,.
\end{equation}
We consider for $p\in \mathbb R^k$ a solution of 
\[
\Delta \psi_p (x) = B (x)\,,\, (\psi_p)_{/\partial\Omega_0} =0\,, \, (\psi_{p})_{/\partial\Omega_j} =p_j \text{ for } j=1,\ldots, k \,.
\]
Let us observe that
\begin{equation}\label{formulapsip}
\psi_p - \psi_q = \sum_j (p_j- q_j) \theta_j\,.
\end{equation}
Let $\Ab_p$ be the associated magnetic potential:
\[
\Ab_p:= (-\partial_{x_2}\psi_p,\partial_{x_1}\psi_p).
\]
The circulation of $\Ab_p$ along $\partial\Omega_j$ is given by 
\[
\Phi (\Omega_j,p):=  \int_{\partial\Omega_j} \Ab_p \,ds  
= \int_{\partial\Omega_j} ( \nu\cdot \nabla \psi_p) \,.
\]
Hence, we get an affine map $\mathbb R^k \ni p\mapsto \Phi (p) \in \mathbb R^k$\,, where for $i=1,\cdots,k$, 
\begin{equation}\label{Phip}
\Phi_i (p) :=\Phi (\Omega_i,p) =  \Phi (\Omega_i,0) +  \int_{\partial\Omega_i} (\Ab_p-\Ab_0) \,   ds =  \Phi (\Omega_i,0) +  \sum_{j} M_{ij} \,  p_j\,,
\end{equation}
where
\[
M_{ij}:= \left( \int_{\partial\Omega_i}  (\nu\cdot \nabla \theta_j)\, ds\right) \,,\, (i,j)\in \{1,\dots, k\}^2\,,
\]

\begin{lemma}
$M$ is invertible.
\end{lemma}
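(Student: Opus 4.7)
The plan is to identify $M$ (up to a sign dictated by the inward-normal convention of Section~\ref{s2}) with the bilinear form of a Dirichlet energy, whence invertibility will be immediate. I would carry this out in three stages: symmetry via Green's second identity, sign-definiteness via Green's first identity applied to a linear combination of the $\theta_j$, and non-degeneracy via the uniqueness of harmonic extensions.

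For symmetry, I would apply Green's second identity to the harmonic pair $\theta_i,\theta_j\in C^\infty(\overline{\Omega})$: the bulk term vanishes, so
$$0 = \int_{\partial\Omega} \bigl(\theta_i\,\nu\cdot\nabla\theta_j - \theta_j\,\nu\cdot\nabla\theta_i\bigr)\, ds.$$
Since $(\theta_\ell)_{/\partial\Omega_m}=\delta_{\ell m}$ for $m\ge 1$ (with each $\theta_\ell$ vanishing also on $\partial\Omega_0$), each of the two surface integrals collapses to a single component and one reads off $M_{ij}=M_{ji}$.

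For definiteness, I would take $c=(c_1,\ldots,c_k)\in\R^k$ arbitrary and form the harmonic function $\theta:=\sum_{j=1}^k c_j\,\theta_j$, whose trace is $c_j$ on $\partial\Omega_j$ and $0$ on $\partial\Omega_0$. Green's first identity, with inward normal $\nu$, then yields
$$\int_\Omega |\nabla\theta|^2\, dx = -\int_{\partial\Omega}\theta\,\nu\cdot\nabla\theta\, ds = -\sum_{i,j=1}^k c_i c_j\,M_{ij}.$$
If $c\neq 0$, then at least one $c_j\neq 0$, so $\theta$ is a non-constant harmonic function (the only constant harmonic function vanishing on $\partial\Omega_0$ is identically zero). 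Hence $\int_\Omega|\nabla\theta|^2>0$, which shows that $-M$ is positive definite, and in particular invertible.

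The only real point of care is the bookkeeping: $\nu$ must be kept inward-pointing consistently on all boundary components $\partial\Omega_0,\partial\Omega_1,\dots,\partial\Omega_k$ so that $c^\top Mc$ and $-\int_\Omega|\nabla\theta|^2$ carry matching signs. Once this is pinned down, there is no analytic obstacle, the whole argument being the finite-dimensional Hodge-theoretic shadow of the standard fact that a harmonic function is determined by its boundary trace.
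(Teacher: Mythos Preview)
Your argument is correct and self-contained, but it is not the route the paper takes. The paper observes that injectivity of $M$ follows immediately from Lemma~\ref{Lemma2.2}: if $Mp=0$ then $\Phi(p)=\Phi(0)$, so $\Ab_p$ and $\Ab_0$ have the same magnetic field, the same canonical gauge conditions~\eqref{eq:0}--\eqref{eq:5a}, and the same circulations; hence $\Ab_p=\Ab_0$, whence $\psi_p=\psi_0$ and $p=0$. Your Green's-identity computation establishes more, namely that $-M$ is symmetric positive definite (so $M$ is in fact the negative of a Gram matrix for the Dirichlet form on $\mathrm{span}\{\theta_1,\dots,\theta_k\}$), at the cost of tracking the inward-normal sign convention carefully; the paper's argument is shorter but leans on the earlier uniqueness lemma, which itself rests on the Neumann rather than the Dirichlet uniqueness principle.
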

\begin{proof}
It suffices to show the injectivity. But that is a direct consequence of 
Lemma~\ref{Lemma2.2}.
\end{proof}

We denote by $\mathbb R^k \ni \Phi \mapsto p(\Phi)$ the inverse map and note that we have
\[
\begin{gathered}
{\psi}_p = {\psi}^\Phi \text{ for }  \Phi=\Phi (p) \text{ or equivalently } p= p(\Phi)\,,\\
{\Ab}_p = {\Ab}^\Phi \text{ for }  \Phi=\Phi (p) \text{ or equivalently } p= p(\Phi)\,,\\
\end{gathered}
\]
We also write $\Phi_0 = \Phi (0)$ and observe that $\psi^{\Phi_0} = \psi_0$.

\subsection{Variation of the oscillation}
The oscillation is defined by
\[
 \Osc ( \psi_p) = \sup \psi_p - \inf \psi_p\,.
\]
Using~\eqref{conttheta} and~\eqref{formulapsip} we get:
\begin{equation} \label{contosc}
| \Osc (\psi_p) - \Osc (\psi_0)| \leq \sum_j |p_j| \,.
\end{equation}
Similarly, using \eqref{Phip} and the invertibility of $M$, we get, for some constant $C>0$
\begin{equation}\label{contosc2}
| \Osc (\psi^\Phi) - \Osc (\psi_0)| \leq C \, |\Phi -\Phi_{0}|\,.
\end{equation}

\subsection{Implementing gauge invariance}\label{ss6.2}
We are look for an optimal upper bound of
\[
\inf_{\alpha \in \mathbb Z^k} \Osc \psi_{p ( \Phi + 2\pi  h \alpha )}\,.
\]
Using the control of the oscillation established in~\eqref{contosc2}, we obtain
\begin{theorem}
Assume that $ \Omega$ is a bounded connected domain with $k$ holes. 
Then there exists $ C>0$ such that, for any $\Phi\in \mathbb  R^k$, any $\ h>0$
\[
 \lambda_{P_-}^D(h,\Ab, B,\Omega)
 \geq h^2  \lambda^D(\Omega) \, \exp \Bigl(- \frac C h d(\Phi, \Phi_0 + 2\pi h \mathbb Z^k )
\Bigr)\,\exp \biggl(\frac{-2 \Osc  (\psi_0) }{h}\biggr) \,.
\]
When $ B>0$, we get
\[
 \lambda_{P_-}^D(h,\Ab, B,\Omega) \geq h^2  \lambda^D(\Omega) \, \exp \Bigl(- \frac C h d(\Phi, \Phi_0 + 2\pi h \mathbb Z^k )\Bigr)\,\exp \biggl(\frac{2  (\inf \psi_0) }{h}\biggr) \,.
\]
\end{theorem}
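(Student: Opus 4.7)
The plan is to combine the gauge-optimized lower bound of Theorem~\ref{th2.1a} with the Lipschitz-type control of the oscillation established in~\eqref{contosc2}, and then read off the distance to the lattice $\Phi_0 + 2\pi h\mathbb{Z}^k$ as the infimum of $|\Phi + 2\pi h\alpha - \Phi_0|$ over $\alpha\in\mathbb{Z}^k$.

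First, I would recall from Theorem~\ref{th2.1a} that, for any $\Phi\in\mathbb{R}^k$,
\[
\lambda_{P_-}^D(h,\Ab,B,\Omega) \geq h^2\,\lambda^D(\Omega)\,\exp\Bigl(-\frac{2}{h}\inf_{\alpha\in\mathbb{Z}^k}\Osc(\psi^{\Phi+2\pi h\alpha})\Bigr).
\]
This already bakes in the gauge invariance of Proposition~\ref{gauge}; the only remaining task is to estimate the infimum on the right.

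Next, I would apply the affine-in-$\Phi$ control of the canonical generating function derived in Section~\ref{s7}. Using~\eqref{contosc2} with $\Phi$ replaced by $\Phi+2\pi h\alpha$, we obtain
\[
\Osc(\psi^{\Phi+2\pi h\alpha}) \leq \Osc(\psi_0) + C\,|\Phi+2\pi h\alpha - \Phi_0|,
\]
with $C>0$ depending only on $\Omega$ (through $M^{-1}$ and the bounds on $\theta_j$), but not on $\Phi$, $h$, or $\alpha$. Taking the infimum over $\alpha\in\mathbb{Z}^k$ and using the symmetry $\mathbb{Z}^k = -\mathbb{Z}^k$, this yields
\[
\inf_{\alpha\in\mathbb{Z}^k}\Osc(\psi^{\Phi+2\pi h\alpha}) \leq \Osc(\psi_0) + C\,d(\Phi,\Phi_0+2\pi h\mathbb{Z}^k).
\]

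Substituting this into the lower bound from Theorem~\ref{th2.1a} and enlarging $C$ by a factor of $2$ produces the first estimate of the theorem. For the second estimate, I would invoke the remark following~\eqref{defpsimin}: when $B>0$ (and $\Omega$ connected), the maximum principle applied to $\psi_0$ (which vanishes on $\partial\widetilde{\Omega}$) gives $\psi_0\leq 0$ and $\sup\psi_0=0$ on the outer boundary component, hence $\Osc(\psi_0)=-\inf\psi_0$. Replacing $-2\Osc(\psi_0)$ by $2\inf\psi_0$ in the exponential yields the second displayed inequality.

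The argument is essentially a packaging of results already established, so I do not anticipate a serious obstacle; the only point requiring mild care is to keep the constant $C$ independent of $\Phi$ and $h$ when passing from~\eqref{contosc2} to the infimum over $\alpha$, which follows because the matrix $M$ in~\eqref{Phip} depends only on the geometry of $\Omega$.
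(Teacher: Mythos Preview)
Your proposal is correct and follows essentially the same approach as the paper: combine the gauge-optimized lower bound of Theorem~\ref{th2.1a} with the Lipschitz control~\eqref{contosc2}, then take the infimum over $\alpha\in\mathbb Z^k$. One cosmetic point: $\psi_0$ actually vanishes on all of $\partial\Omega$ (not just $\partial\widetilde\Omega$), since it corresponds to $p=0$; this only strengthens your application of the maximum principle.
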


\begin{corollary} \label{th1.3nscbis}
 If $B(x) >0$,  $\Omega$ is connected, and if $\psi_0$ is the solution of 
\[
\Delta \psi_0 = B(x) \text{ in } \Omega \,,\, \psi_{0/\partial \Omega} =0\,,
\]
then  for any $\Ab$ such that $\curl \Ab=B$ the  groundstate energy of the associated  Pauli operator satisfies
\[
\liminf_{h\rightarrow 0} h \log  \lambda_{P_-}^D(h,\Ab,B, \Omega) \geq   2 \inf \psi_0\,.
\]
\end{corollary}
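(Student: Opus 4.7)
The plan is simply to unwrap the lower bound established in the unnumbered theorem of Subsection~\ref{ss6.2}. Under the hypothesis $B>0$, that theorem already gives
\[
\lambda_{P_-}^D(h,\Ab,B,\Omega) \geq h^2 \lambda^D(\Omega)\, \exp\Bigl(-\tfrac{C}{h}\, d(\Phi, \Phi_0 + 2\pi h \mathbb{Z}^k)\Bigr)\, \exp\Bigl(\tfrac{2\inf\psi_0}{h}\Bigr),
\]
with $C>0$ independent of $h$ and $\Phi$. Taking $h\log$ of both sides, I obtain
\[
h\log \lambda_{P_-}^D(h,\Ab,B,\Omega) \geq 2h\log h + h\log \lambda^D(\Omega) - C\, d(\Phi, \Phi_0 + 2\pi h\mathbb{Z}^k) + 2\inf \psi_0.
\]
The first two terms tend to $0$ as $h\to 0^+$, so everything reduces to controlling the distance $d(\Phi,\Phi_0 + 2\pi h\mathbb{Z}^k)$.

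The key point is that the fundamental cell of the lattice $\Phi_0 + 2\pi h\mathbb{Z}^k$ has side length $2\pi h$ and therefore shrinks with $h$. Concretely, writing the coordinates of $\Phi - \Phi_0$ in the form $2\pi h(\alpha_j + r_j)$ with $\alpha_j\in\mathbb{Z}$ and $r_j\in[-1/2,1/2)$, I get
\[
d(\Phi,\Phi_0 + 2\pi h\mathbb{Z}^k) \leq 2\pi h\, |r| \leq \pi h\sqrt{k}.
\]
Inserting this estimate into the previous inequality yields
\[
h\log \lambda_{P_-}^D(h,\Ab,B,\Omega) \geq 2h\log h + h\log \lambda^D(\Omega) - \pi C\sqrt{k}\, h + 2\inf \psi_0,
\]
and passing to $\liminf_{h\to 0^+}$ gives the announced bound $2\inf\psi_0$.

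There is really no obstacle: the hard work, namely the uniform control $|\Osc(\psi^{\Phi'}) - \Osc(\psi_0)| \leq C|\Phi' - \Phi_0|$ of~\eqref{contosc2} combined with the gauge-invariance quantization~\eqref{quantiz}, has already been carried out in the theorem of Subsection~\ref{ss6.2}. The corollary then reduces to the elementary observation that, for fixed $\Phi$, the shifted lattice $\Phi_0 + 2\pi h\mathbb{Z}^k$ becomes dense at scale $O(h)$ as $h\to 0$, and $O(h)$ is exactly the rate needed to absorb the factor $1/h$ appearing in the exponent. In other words, the semi-classical parameter itself performs the optimization in $\alpha\in\mathbb{Z}^k$, and the circulation dependence disappears in the limit.
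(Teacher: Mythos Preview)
Your proof is correct and is exactly the argument the paper has in mind: the corollary is stated without proof because it follows immediately from the preceding theorem by taking $h\log$ and using that $d(\Phi,\Phi_0+2\pi h\mathbb{Z}^k)=O(h)$, which is precisely what you spell out. The only content is the observation that the lattice spacing is $2\pi h$, so the distance term contributes $O(1)$ after division by $h$ and vanishes against the $O(h)$ prefactor---you have captured this cleanly.
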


\begin{remark}\label{Rem5.4}
If $B >0$ in $\Omega$, the minimal oscillation of $\psi$ when $\psi$ is a solution of $\Delta \psi = B$ in $\Omega$ and $\psi=0$ on $\partial \widetilde \Omega$ is obtained when $\psi$ satisfies in addition  $\psi =0$
 on $\partial \Omega$.
 This is an immediate consequence of the maximum principle (see Subsection 7.4 in \cite{HPS}). Hence the oscillation is minimal for $\Phi=\Phi_0$ and the  corresponding $\Ab$.
 \end{remark}

\section{Upper bounds in the general case with $B>0$}\label{s6}

\subsection{Preliminary discussion}
In the simply connected case, with the explicit choice of $\psi$, it is easy to get:
 \begin{proposition}
If $B>0$  and assuming that $\Omega$ is simply connected,  we have, for any $\eta >0\,$,
\[
 \lambda_{P_-}^D(h,B,\Omega) \leq C  \exp \Bigl(\frac{2 \psi_{min}}{h}\Bigr)
\exp\Bigl(\frac {2\eta}{ h}\Bigr) \,.
\]
\end{proposition}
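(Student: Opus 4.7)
The plan is to apply the min--max principle with the explicit test function $u:=\exp(-\psi_0/h)-1$, chosen so that the exponential weights in the factorization identity~\eqref{eq:pr1} cancel exactly. Since $\Omega$ is simply connected, Proposition~\ref{gauge} shows that any two potentials with curl $B$ give unitarily equivalent Pauli operators, so I may work with the canonical $\Ab=(-\partial_{x_2}\psi_0,\partial_{x_1}\psi_0)$ from Proposition~\ref{propexist}, which satisfies both~\eqref{eq:0} and~\eqref{eq:5a}. The function $u$ belongs to $H_0^1(\Omega)$ because $\psi_0|_{\partial\Omega}=0$, and writing $u=\exp(-\psi_0/h)\,v$ with $v=1-\exp(\psi_0/h)$ one has $(\partial_{x_1}+i\partial_{x_2})v = -h^{-1}\exp(\psi_0/h)(\partial_{x_1}\psi_0+i\partial_{x_2}\psi_0)$, so that $\exp(-2\psi_0/h)\,|(\partial_{x_1}+i\partial_{x_2})v|^2 = h^{-2}\,|\nabla\psi_0|^2$.

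Substituting into~\eqref{eq:pr1} and integrating by parts using $\psi_0|_{\partial\Omega}=0$ and $\Delta\psi_0=B$ yields
\[
\|(hD-\Ab)u\|^2 - h\int_\Omega B|u|^2\,dx = \int_\Omega |\nabla\psi_0|^2\,dx = \int_\Omega B\,(-\psi_0)\,dx =: C_0,
\]
a strictly positive constant \emph{independent of $h$}. So the numerator of the Rayleigh quotient is already $O(1)$, and the whole exponential decay must come from making $\|u\|^2$ large.

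For this lower bound, the strong maximum principle applied to the non-constant subharmonic $\psi_0$ places the minimizer $x_0$ of $\psi_0$ in the interior of $\Omega$. For the given $\eta>0$, pick a ball $B(x_0,r_\eta)\subset\Omega$ on which $\psi_0\leq\psi_{min}+\eta$. On that ball $\exp(-\psi_0/h)\geq\exp(-(\psi_{min}+\eta)/h)\to+\infty$ as $h\to 0^+$, so for all $h$ small enough one has pointwise $(\exp(-\psi_0/h)-1)^2 \geq \tfrac14 \exp(-2\psi_0/h) \geq \tfrac14 \exp(-2(\psi_{min}+\eta)/h)$; integrating gives $\|u\|^2 \geq c_\eta \exp(-2(\psi_{min}+\eta)/h)$ with $c_\eta=|B(x_0,r_\eta)|/4>0$.

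Combining the two estimates via the variational characterization of $\lambda_{P_-}^D$ produces $\lambda_{P_-}^D(h,B,\Omega)\leq (C_0/c_\eta)\,\exp(2\psi_{min}/h)\,\exp(2\eta/h)$ for $h$ small, which is the claimed inequality (the prefactor is enlarged, if needed, to absorb any bounded range of $h$). The only substantive step is the choice $v=1-\exp(\psi_0/h)$: it triggers the exact cancellation of the Gaussian weights in~\eqref{eq:pr1} that makes the numerator $O(1)$. Any coarser cutoff $v$ leaves mismatched weights and yields at best a polynomial-in-$h$ upper bound, which is far too weak to detect the exponentially small scale $\exp(2\psi_{min}/h)$.
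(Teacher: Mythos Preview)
Your argument is correct, but it is not the route the paper takes. The paper's proof uses the trial state $u=\exp(-\psi_0/h)\,v_\eta$ with a smooth cutoff $v_\eta$ equal to $1$ outside a thin boundary layer; the paper then remarks that the alternative trial state $u=\exp(-\psi_0/h)-\exp(\psi_0/h)$ from~\cite{HPS} gives a (probably) sharper prefactor. Your choice $u=\exp(-\psi_0/h)-1$ is a variant of the latter: it exploits the same exact cancellation of the weight $\exp(-2\psi_0/h)$ in~\eqref{eq:pr1}, yielding a numerator that is literally an $h$-independent constant. That is a clean advantage over the cutoff argument, where the numerator still carries a factor $h^2\exp(O(\eta)/h)$.

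One point in your write-up is wrong, however, and you should delete it: the claim that ``any coarser cutoff $v$ \ldots\ yields at best a polynomial-in-$h$ upper bound'' is false and contradicts the paper's own proof. With the cutoff approach, $\nabla v_\eta$ is supported in a boundary collar where $\psi_0$ is close to $0$ (since $\psi_0|_{\partial\Omega}=0$ and $\psi_0$ is continuous), so on that support $\exp(-2\psi_0/h)\leq \exp(\eta/h)$; the numerator is then $O(h^2\exp(\eta/h))$, and dividing by the same denominator estimate you used gives exactly the exponential bound $\exp(2\psi_{min}/h)\exp(2\eta/h)$, in fact with an extra helpful factor $h^2$. Your trial state is slicker, not essential.
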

The proof is obtained by taking as trial state  $u= \exp \bigl(- \frac{\psi}{h}\bigr) v_\eta$, with $v_\eta $ with compact support in $\Omega$ and $v_\eta=1$ outside a sufficiently small neighborhood of the boundary and implementing this quasimode in \eqref{eq:pr1}. One concludes by the max-min principle. It has been shown in \cite{HPS} how to have a (probably) optimal upper bound by using as trial state 
$u= \exp \bigl(- \frac{\psi}{h}\bigr) - \exp\bigl( \frac \psi h\bigr)$.

In the non simply connected case the situation is much more complicated.

We can get a general result by considering a simply connected open set  
$\widehat \Omega$  in $\Omega$.  This proves that  
$ \lambda_{P_-}^D(h,\Ab, B,\Omega)$ is indeed
exponentially small, independently of the circulations along each component of the boundary as soon as $B$ is positive somewhere.
 
 \begin{proposition}\label{prop6.2}
Assume that $\Omega$ is a bounded and connected domain and that the minimum $\psi_{min}$
of $\psi$ is attained in an interior point, i.e. 
$\min_{\partial\Omega}\psi>\psi_{min}$. Then, for any $\eta >0$, there 
exists $C_\eta >0$ and $h_\eta$ such that, for $h\in (0,h_\eta)$, 
 \begin{equation} \label{upbd}
 \lambda_{P_-}^D(h,\Ab, B,\Omega) \leq C  \exp\biggl( \frac{2 (\psi_{min} -  \min_{\partial\Omega}\psi ) }{h}\biggr)  \exp \Bigl( \frac {2\eta}{ h}\Bigr) \,.
\end{equation}
 \end{proposition}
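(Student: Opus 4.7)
The plan is to apply the min--max principle to the natural quasimode $u = e^{-\psi/h} v_\eta$ suggested by identity~\eqref{eq:pr1}. By gauge invariance (Proposition~\ref{gauge}), I may first replace $\Ab$ with a gauge-equivalent potential satisfying~\eqref{eq:5a} with the same circulation vector $\Phi$, so that $\psi = \psi^\Phi$ from Proposition~\ref{propexist} is well defined and is the canonical generating function of $\Ab$. Elliptic regularity makes $\psi$ continuous on $\overline{\Omega}$, and the hypothesis $\min_{\partial\Omega}\psi > \psi_{min}$ then forces the existence of an interior minimizer $x_0 \in \Omega$.

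Next I construct $v_\eta$ as a two-scale cut-off. Given $\eta > 0$, continuity of $\psi$ yields $r>0$ with $\psi < \psi_{min} + \eta/2$ on $B_r(x_0)$, together with a neighborhood $U_\eta$ of $\partial\Omega$ in $\Omega$ on which $\psi > \min_{\partial\Omega}\psi - \eta/2$. Choose a real-valued $v_\eta \in C_c^\infty(\Omega)$ with $v_\eta \equiv 1$ on $\Omega \setminus U_\eta$ (in particular on $B_r(x_0)$, after shrinking $r$ if necessary) and $\supp\nabla v_\eta \subset U_\eta$. Then $u := e^{-\psi/h} v_\eta \in C_c^\infty(\Omega) \subset H_0^1(\Omega)$. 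The novelty compared to the simply connected situation of~\cite{HPS} is that $v_\eta$ must vanish near every component of $\partial\Omega$: since $\psi$ is constant but generally nonzero on each inner boundary component, the prefactor $e^{-\psi/h}$ does not by itself enforce the Dirichlet condition there.

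Substituting $u$ into~\eqref{eq:pr1} and using $|(\partial_{x_1}+i\partial_{x_2})v_\eta|^2 = |\nabla v_\eta|^2$, the numerator is controlled by
\[
h^2 \int_{U_\eta} e^{-2\psi/h}|\nabla v_\eta|^2\,dx \leq h^2 \|\nabla v_\eta\|_{L^2}^2 \exp\bigl(-2(\min_{\partial\Omega}\psi - \eta/2)/h\bigr),
\]
while the denominator is bounded below by integrating over $B_r(x_0)$:
\[
\int_\Omega e^{-2\psi/h}|v_\eta|^2\,dx \geq |B_r(x_0)| \exp\bigl(-2(\psi_{min}+\eta/2)/h\bigr).
\]
The min--max quotient then gives~\eqref{upbd}, with the polynomial factor $h^2\|\nabla v_\eta\|_{L^2}^2/|B_r(x_0)|$ absorbed into the extra $\exp(2\eta/h)$ for $h$ small enough.

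The main obstacle is conceptual rather than analytic, namely accepting the $\exp(2\eta/h)$ loss: forcing $v_\eta$ to vanish near the full boundary means we can only exploit the weight $e^{-2\psi/h}$ away from $\partial\Omega$, so the competition between the two factors is driven by $\min_{\partial\Omega}\psi$ rather than by $\psi$ itself. The bound therefore becomes informative only under the strict inequality $\min_{\partial\Omega}\psi > \psi_{min}$ assumed in the statement. Recovering the (probably) optimal $\exp(2\psi_{min}/h)$ rate obtained in the simply connected case of Theorem~\ref{th1.3} would demand a genuinely different trial state --- an $H_0^1$ analogue of the double-exponential ansatz $e^{-\psi/h}-e^{\psi/h}$ of~\cite{HPS} adapted to the multiply connected geometry --- which we do not pursue here.
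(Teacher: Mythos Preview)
Your argument is correct and is exactly the approach the paper has in mind: the trial state $u=e^{-\psi/h}v_\eta$ with a smooth cut-off $v_\eta$ supported away from $\partial\Omega$, inserted into identity~\eqref{eq:pr1}, followed by the min--max principle. The paper only sketches this (in the paragraph preceding the proposition, for the simply connected case) and leaves the non-simply connected adaptation implicit; your write-up supplies precisely the missing details, including the correct localization of $\supp\nabla v_\eta$ in a collar where $\psi>\min_{\partial\Omega}\psi-\eta/2$ and the lower bound on the denominator via a ball around the interior minimizer.
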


\begin{remark} We observe that the condition  $\min_{\partial\Omega}\psi > \psi_{min}$ is stable when a small variation of the circulations $\Phi_j$ is performed. 
\end{remark}

\subsection{Application}
The next question is to prove the upper-bound. We have already shown that for $p=0$ we can find $\Ab$ such that the corresponding circulations $\Phi=(\Phi_1,\cdots,\Phi_k)$ satisfy $p(\Phi)=0$ (with the notation of Section \ref{s7}). 
The proof of the upper-bound in the case $p=0$ is then the same as in the simply-connected case. We choose $\alpha (h) \in \mathbb Z^k$ such that 
\[
\bigl|\Phi + 2\pi h \alpha(h) - \Phi_0\bigr| \leq C h\,.
\]
For  $\Phi^{new}_h:= \Phi + 2\pi h \alpha(h)$,  $p(\Phi^{new}_h)$ is $\mathcal O (h)$, the solution $\psi_{p(\Phi^{new}_h)}$ is  $\mathcal O (h)$ close to $\psi_0$. Its infimum is realized inside $\Omega$. 
We can then use \eqref{upbd} for $\Ab:= \Ab^{\Phi^{new}_h}$ and get 
for any $\eta >0$, the existence of 
exists $C_\eta >0$ and $h_\eta$ such that, for $h\in (0,h_\eta)$, 
 \begin{equation} \label{upbd}
 \lambda_{P_-}^D(h,\Ab^{\Phi^{new}_h}, B,\Omega) \leq C  \exp\Bigl( \frac{2 (\inf \psi_{0}  + \mathcal O (h) ) }{h}\Bigr)  \exp \Bigl( \frac {2\eta}{ h}\Bigr) \,.
\end{equation}
Playing with $\eta >0$ and using that $ \lambda_{P_-}^D(h,\Ab^{\Phi^{new}_h}, B,\Omega)=   \lambda_{P_-}^D(h,\Ab^\Phi,B, \Omega)$, we finally obtain:
\begin{proposition} \label{th1.3nscter}
 If $B(x) >0$,  $\Omega$ is connected, and if $\psi_0$ is the solution of 
\[
\Delta \psi_0 = B(x) \text{ in } \Omega \,,\, \psi_{0/\partial \Omega} =0\,,
\]
then  for any $\Ab$ such that $\curl \Ab=B$ the  groundstate energy of the associated  Pauli operator satisfies
\[
\lim\sup_{h\rightarrow 0} h \log  \lambda_{P_-}^D(h,\Ab,B, \Omega) \leq   2 \inf \psi_0\,.
\]
\end{proposition}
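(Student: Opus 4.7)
The plan is to couple gauge invariance with the quasimode construction already recorded in Proposition~\ref{prop6.2}. Although the bound \eqref{upbd} a priori depends on the circulations via the interior depth $\psi_{min} - \min_{\partial\Omega}\psi$, Proposition~\ref{gauge} together with the quantization condition \eqref{quantiz} makes the spectrum invariant under $\Phi \mapsto \Phi + 2\pi h \alpha$ for $\alpha \in \mathbb{Z}^k$. I would use this freedom to translate any given $\Phi$ to within $O(h)$ of the preferred value $\Phi_0$ (for which $p=0$ and $\psi = \psi_0$) and exploit the fact that the interior depth of the generating function varies continuously with $\Phi$.

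Concretely, given $\Ab$ with $\curl \Ab = B$, I would first use Subsections~\ref{ss2.1}--\ref{ss2.2} to replace $\Ab$ by the gauge-equivalent potential $\Ab^\Phi$ satisfying \eqref{eq:5a} with prescribed circulation vector $\Phi$. For each small $h$ pick $\alpha(h) \in \mathbb{Z}^k$ with $|\Phi + 2\pi h \alpha(h) - \Phi_0| \leq C h$ and set $\Phi^{new}_h := \Phi + 2\pi h \alpha(h)$. The inverse of the affine map in \eqref{Phip} then gives $|p(\Phi^{new}_h)| = O(h)$, and \eqref{formulapsip} together with the bound \eqref{conttheta} yields $\|\psi^{\Phi^{new}_h} - \psi_0\|_\infty = O(h)$. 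Since $B>0$ combined with $\psi_0 = 0$ on $\partial\Omega$ forces $\psi_0$ to attain a strictly negative minimum in the interior of $\Omega$ (strong maximum principle), continuity gives, for $h$ small enough,
\[
\inf \psi^{\Phi^{new}_h} = \inf \psi_0 + O(h), \qquad \min_{\partial\Omega} \psi^{\Phi^{new}_h} = O(h),
\]
so the interior-minimum hypothesis of Proposition~\ref{prop6.2} is fulfilled by $\Ab^{\Phi^{new}_h}$.

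Applying that proposition and then invoking isospectrality from Proposition~\ref{gauge} to replace $\Ab^{\Phi^{new}_h}$ by $\Ab^\Phi$ (and hence by the originally given $\Ab$), I obtain, for every $\eta>0$ and all sufficiently small $h$,
\[
\lambda_{P_-}^D(h,\Ab, B,\Omega) \leq C_\eta \exp\Bigl(\frac{2 \inf \psi_0 + 2\eta + O(h)}{h}\Bigr).
\]
Taking logarithms, multiplying by $h$, sending $h \to 0^+$, and finally letting $\eta \to 0^+$ delivers the claimed $\limsup$ bound.

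The main obstacle is checking that the constants $C_\eta$ and $h_\eta$ coming from Proposition~\ref{prop6.2} can be chosen uniformly along the moving family $\Ab^{\Phi^{new}_h}$. This should reduce to continuity: the trial state is built from a smooth cutoff supported in a fixed compact subdomain of $\Omega$ independent of the $O(h)$-perturbation of $\psi$, and the integrand $|(\partial_{x_1}+\iz\partial_{x_2})v|^2 \exp(-2\psi/h)$ depends continuously on $\psi$ in sup norm. Still, one must be careful to fix $\eta$ before performing the $h \to 0$ limit so that the $O(h)$ error in the exponent is genuinely negligible on the exponential scale.
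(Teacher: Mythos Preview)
Your argument is essentially identical to the paper's: both shift $\Phi$ by $2\pi h\alpha(h)$ to land within $O(h)$ of $\Phi_0$, use \eqref{formulapsip} to conclude $\psi^{\Phi^{new}_h}$ is $O(h)$-close to $\psi_0$, apply Proposition~\ref{prop6.2}, and then invoke gauge invariance to transfer the bound back to the original $\Ab$. If anything, you supply more justification than the paper (the strong maximum principle for the interior minimum, the continuity of the boundary trace) and you explicitly flag the uniformity-in-$h$ of the constants in \eqref{upbd}, a point the paper does not comment on.
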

\subsection{Conclusion}
With Corollary \ref{th1.3nscbis} and Proposition \ref{th1.3nscter} we have achieved the proof of Theorem \ref{th1.3nsc}.

\section{Analysis of an example -- the annulus}\label{sannulus}

\subsection{Introduction}\label{ss4.1} 
Throughout this section we consider $\Omega$ to be an annulus and the magnetic
field to be uniform and positive. We stress that this is a particular case covered
in the previous sections. Our aim is to give more precise estimates in this 
particular case and to make the connection between the trace $p$ and the
circulation $\Phi$ explicit. We consider the annulus
\[
A(\rho,R):=\{(x_1,x_2) \in \mathbb R^2\,,\, \rho ^2 < x_1^2+x_2^2 < R^2\}\,,
\]
and start with the  particular choice  $\Ab =(B/2)(-x_2, x_1)$ for some $B>0$. 
This corresponds to the case of a constant magnetic field of strength $B$.

In this case we have
\begin{equation*}\label{eq:19} 
\psi^\Ab (x_1,x_2) = \frac{B(x_1^2+ x_2^2-R^2)}{4}\quad
\text{and}\quad 
\psi^{\Ab}_{min}=\frac{B(\rho^2-R^2)}{ 4}\,.
\end{equation*}

Theorem \ref{th2.1} gives the following lower bound for general constant 
magnetic field~$B$ and $h>0$:
\begin{theorem}\label{th4.1}
The ground state energy of the Pauli operator with  $\Ab  = (B/2) (-x_2, x_1)$ 
and $h>0$ satisfies
\[
\lambda^D_{P_-} (h,\Ab,B,A(\rho,R)) \geq h^2\, \frac{ {\bf j^2} }{\pi (R^2-\rho^2)} \exp \bigl(- (R^2-\rho^2) B/ (2h) \bigr)\,,
\]
where $\bf j$ is the smallest positive zero of the Bessel function $J_0$.
\end{theorem}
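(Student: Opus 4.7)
The plan is to specialize Theorem~\ref{th2.1} to $\Omega = A(\rho,R)$ equipped with the symmetric-gauge potential $\Ab = (B/2)(-x_2, x_1)$, carry out the explicit computation of $\Osc(\psi^\Ab)$, and then insert a purely geometric lower bound on the Dirichlet Laplacian ground state $\lambda^D(A(\rho,R))$.

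First I would check that $\Ab$ is already in the canonical gauge of Subsection~\ref{ss2.1}: clearly $\curl \Ab = B$ and $\Div \Ab = 0$ throughout $A(\rho,R)$, and on each boundary circle the outward normal is radial, so $\Ab \cdot \nu = (B/(2r))(-x_2 x_1 + x_1 x_2) = 0$. Next I would verify that the candidate $\psi^\Ab(x) = B(|x|^2 - R^2)/4$ is the canonical generating function produced by Proposition~\ref{propexist}: it satisfies $\Delta \psi^\Ab = B$, the identities $A_1 = -\partial_{x_2}\psi^\Ab$ and $A_2 = \partial_{x_1}\psi^\Ab$ hold, and it vanishes on the outer circle $\{|x|=R\}$, which coincides with $\partial\widetilde{\Omega}$ for the simply connected envelope $\widetilde{\Omega} = \{|x|<R\}$. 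By Proposition~\ref{propexist} this identifies $\psi^\Ab$ uniquely.

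Since $\psi^\Ab$ is radial and strictly increasing in $r=|x|$, its extrema on $\overline{A(\rho,R)}$ are attained on the two boundary components, giving $\psi^\Ab_{max} = 0$ and $\psi^\Ab_{min} = B(\rho^2 - R^2)/4$, hence
\[
\Osc(\psi^\Ab) = \frac{B(R^2-\rho^2)}{4}.
\]
Plugging this into Theorem~\ref{th2.1} yields at once
\[
\lambda^D_{P_-}(h,\Ab,B,A(\rho,R)) \geq h^2\, \lambda^D(A(\rho,R))\, \exp\bigl(-B(R^2-\rho^2)/(2h)\bigr),
\]
and all the magnetic content of the argument is done.

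The only remaining step -- and the one bearing the sole conceptual difficulty -- is to bound $\lambda^D(A(\rho,R))$ from below by $\mathbf{j}^2/(\pi(R^2-\rho^2))$. This is a purely geometric statement disjoint from the Pauli problem. I would handle it by the Faber--Krahn inequality: since $|A(\rho,R)| = \pi(R^2-\rho^2)$ and among planar domains of prescribed area the disk minimizes the Dirichlet ground state, one obtains $\lambda^D(A(\rho,R)) \geq \mathbf{j}^2/(R^2 - \rho^2)$, which comfortably implies the stated inequality. Thus the hard part is not in the proof itself but in borrowing this classical isoperimetric bound; everything else is an explicit specialization of the general lower bound of Section~\ref{s3}, with the radial symmetry making the oscillation of $\psi^\Ab$ trivial to evaluate.
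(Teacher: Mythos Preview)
Your argument is correct and follows the paper's approach exactly: the paper simply states that Theorem~\ref{th2.1} yields the bound, with the oscillation computed from $\psi^\Ab(x)=B(|x|^2-R^2)/4$ as given just before the theorem. You have merely filled in the details the paper omits, in particular the geometric lower bound on $\lambda^D(A(\rho,R))$; your Faber--Krahn step in fact gives $\lambda^D(A(\rho,R))\geq \mathbf{j}^2/(R^2-\rho^2)$, which is stronger by a factor of~$\pi$ than the constant appearing in the stated theorem.
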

This corresponds to the model analyzed in~\cite{EKP} in the non-simply 
connected case. The case of the disk was first considered in~\cite{E},~\cite{EKP} 
and~\cite{HPS} (see also references therein). Note that this lower bound is 
universal but the general theory of the previous sections shows   that, except 
for the disk,  we are far from from optimality. We have indeed 
not used the improvment using the gauge invariance.

\subsection{General circulation}\label{ss4.2}
A radial solution  of $\Delta \psi =B$ has necessarily the form
 \[
 \psi (r) = B \frac {r^2}{4} + C \log r  + D,\quad r=|x|.
 \]
By a scaling, one can normalize (at the prize of changing $B$) by saying that the exterior radius is 
$R= 1$ and that $\psi(1)=0$.
Observing that if $\psi$  is a solution of $\Delta \psi =1$, then 
$\psi_B:= B \psi$ satisfies $\Delta \psi_B=B$, we can reduce our analysis to 
$R=1$ and $B=1$ and this is the case which is considered below.
Hence the general solution such that $\psi(1)=0$  is given by
\[
\widehat \psi^C(r) = \frac{r^2}{4} - \frac 14 + C \log r
\]
where the constant  $C$ has to be related  with the circulation  $\Phi =\Phi_1 $ along  the interior circle of radius $\rho$ or with the trace $p_1=p$ of $\widehat \psi^C$ on the same circle:
\begin{equation}
p:= \frac{\rho ^2}{4} - \frac 14 +  C \log \rho\,.
\end{equation}
Furthermore, since $\Delta\log r=2\pi\delta_0$, where $\delta_0$ denotes the
Dirac measure at the origin, we have
 \begin{equation}\label{formFlux}
\Phi = 2\pi \, C + \pi \rho^2\,,
\end{equation}
and with the notation of the previous sections,
\begin{equation}
\psi^\Phi= \psi_p = \widehat \psi^C\,.
\end{equation}
In our particular case, the link between $\Phi =\Phi_1$ and $p=p_1$ the value of $\psi$ on the interior circle $\partial \Omega_1$ reads
\begin{equation}
\frac{\Phi}{2 \pi}= \frac{1}{\log \rho} p    +  \rho^2  + \frac{1-\rho^2}{4 \log \rho}\,.
\end{equation}

\subsection{Minimization over $C$ of the oscillation of $\widehat \psi^C$.}
We next analyze the variation of $\psi^C$. We have:
\[
(\widehat\psi^C)'(r) = \frac r 2 +  \frac C  r\,.
\]
Hence, the only possible critical points must satisfy $r^2 = -  2 C$. We get
three different cases.

For $C \geq -\rho^2/2$, there is no minimum inside $(\rho, 1)$. Hence
\[
\widehat\psi_{min}^C = \frac{\rho^2}{ 4} -\frac 14+ C \log \rho <0,\quad \widehat\psi^C_{max}=0\,,
\]
the maximum being attained at the inner radius, that is for $r=\rho$. Hence, in
this case, the oscillation is
\begin{equation}\label{OscpsiC1}
\Osc (\widehat\psi^C):= \widehat\psi_{max}^C-\widehat\psi_{min}^C=  - \frac{\rho^2}{ 4} + \frac 14 -   C \log \rho. 
\end{equation}

For $C <  -1/2$,  there are also no minimum inside $(\rho, 1)$. 
The minimum is  for $r=1$ and the maximum is for $r=\rho$. One has 
 \[
\widehat\psi^C_{max} =   \frac{\rho^2}{ 4} -\frac 14+ C \log \rho,\quad \psi^C_{min}=0\,.
 \]
Hence the oscillation is 
\begin{equation}\label{OscpsiC2}
\Osc (\widehat\psi^C) =  \frac{\rho^2}{ 4} - \frac 14 +  C \log \rho.
\end{equation}

Finally, for $C \in (-1/2, -\rho^2/2)$, we have the maximum at 
the boundary
\[
\widehat\psi_{max}^C = \max \Bigl(0, \frac{\rho^2}{ 4} -\frac 14+ C \log \rho \Bigr) \,,
\]
a minimum in $(\rho,1)$  such that 
 \[
\widehat \psi_{min}^C = - \frac C 2  - \frac 14 + \frac C2  \log (-2C)\,.
 \]
Thus 
 \begin{equation}\label{OscpsiC3}
\Osc (\widehat\psi^C)=  \max \Bigl(0, \frac{\rho^2}{ 4} -\frac 14 
+ C \log \rho \Bigr)   + \frac C 2  + \frac 14 - \frac C2  \log (-2C).
\end{equation}
 
By direct computation, one can recover the general result  (see 
Remark~\ref{Rem5.4}) that the oscillation of $\widehat\psi^C$ is minimal for 
$C$ such that $\widehat\psi^C (\rho)=0$.

\begin{proposition}\label{propminoscannulus}
The infimum over $C$ of $\Osc (\widehat \psi^C)$  is 
\[
\frac {C_{crit}}{ 2}  + \frac 14 - \frac {C_{crit}}{2}  \log (-2C_{crit}) >0\,,
\]
with $C_{crit}$ defined by
\begin{equation}\label{defCcrit}
C_{crit}:=    \frac{1- \rho^2}{4  \log \rho}\,.
\end{equation}
\end{proposition}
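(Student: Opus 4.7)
The plan is to identify the minimizer of $C\mapsto\Osc(\widehat\psi^C)$ among the three piecewise regimes~\eqref{OscpsiC1}--\eqref{OscpsiC3}, then evaluate the oscillation there. Guided by Remark~\ref{Rem5.4}, I expect the minimum to occur at the value of $C$ for which $\widehat\psi^C(\rho)=0$, i.e.\ where $\psi$ vanishes on both boundary circles; solving $\rho^2/4-1/4+C\log\rho=0$ yields exactly $C=C_{crit}$.

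First I would check that $C_{crit}$ falls in the regime~\eqref{OscpsiC3}, namely $C_{crit}\in(-1/2,-\rho^2/2)$. The inequality $C_{crit}>-1/2$ is equivalent to $1-\rho^2+2\log\rho<0$, and $C_{crit}<-\rho^2/2$ to $1-\rho^2+2\rho^2\log\rho>0$; both follow from the observation that each expression vanishes at $\rho=1$ and is monotone on $(0,1)$, so the inequalities are elementary.

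Plugging $C=C_{crit}$ into~\eqref{OscpsiC3}, the bracket $\rho^2/4-1/4+C_{crit}\log\rho$ vanishes by construction, so the $\max$-term drops out and one recovers precisely
\[
\Osc(\widehat\psi^{C_{crit}})=\frac{C_{crit}}{2}+\frac14-\frac{C_{crit}}{2}\log(-2C_{crit}).
\]
Strict positivity follows because $B>0$ prevents $\widehat\psi^{C_{crit}}$ from being constant on the annulus.

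Finally, to justify that $C_{crit}$ is a \emph{global} minimizer, I would verify monotonicity of $\Osc$ on each interval. In~\eqref{OscpsiC1} the slope $-\log\rho>0$ makes $\Osc$ increasing; in~\eqref{OscpsiC2} the slope $\log\rho<0$ makes it decreasing. Inside~\eqref{OscpsiC3}, writing $h(C):=C/2+1/4-(C/2)\log(-2C)$ one computes $h'(C)=-\tfrac12\log(-2C)>0$ since $-2C\in(\rho^2,1)$; for $C>C_{crit}$ the $\max$ vanishes so $\Osc=h$ is increasing, while for $C<C_{crit}$ the derivative is $\log\rho-\tfrac12\log(-2C)$, which is negative precisely because $-2C>\rho^2$. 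The three formulas glue continuously at the interfaces $C=-\rho^2/2$ and $C=-1/2$, yielding a piecewise smooth function with a unique global minimum at $C_{crit}$. The main concern is the bookkeeping across the three regimes, especially verifying continuity at the interfaces; the underlying calculus is entirely elementary.
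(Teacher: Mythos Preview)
Your argument is correct and is precisely the ``direct computation'' that the paper alludes to just before the proposition; the paper itself gives no further details beyond invoking Remark~\ref{Rem5.4} to locate the minimizer at the value of $C$ with $\widehat\psi^C(\rho)=0$. Your explicit piecewise monotonicity check, the verification that $C_{crit}\in(-1/2,-\rho^2/2)$, and the continuity at the interfaces are all sound and fill in what the paper leaves to the reader.
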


\begin{figure}[h]
\centering
\includegraphics{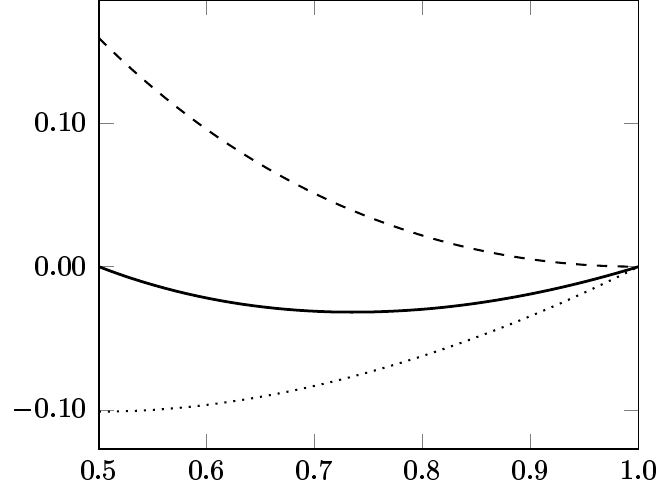}
\caption{Graph of $\widehat \psi^C$ with $\rho=0.5$ in the cases 
 $C=-1/2$ (dashed), $C=C_{crit}\approx -0.27$ (solid), and $C=-\rho^2/2$ (dotted).}
\label{fig:psic}
\end{figure}

\begin{remark}
The analysis given in this section can be extended to the case of a non-uniform
radial magnetic field.
\end{remark}

\subsection{Numerical simulations for the annulus}

We close this section by considering, numerically, the bottom of the spectrum
when $\rho=1/2$. Thus, we consider
\[
A(1/2,1)=\{(x,y)\in\mathbb R^2,\ (1/2)^2<x_1^2+x_2^2<1\}, 
\]
and use the magnetic potential
\[
\Ab=(A_1,A_2)=\frac{1}{2}(-x_2,x_1)+\frac{\kappa}{x_1^2+x_2^2}(-x_2,x_1).
\]
Here, the parameter $\kappa$ denotes the flux of an Aharonov--Bohm solenoid 
located at the origin, $\kappa=\Phi/2\pi$. Thus $\kappa$ is directly linked to 
$C$ and $p$. We investigate how the lowest eigenvalue of the Pauli
operator corresponding to this magnetic potential depends on $\kappa$ (for small
$h$!), and to link it with our previous discussion. The Pauli operator
\[
P=(hD_{x_1}-A_1)^2+(hD_{x_2}-A_2)^2-h
\]
can, using polar coordinates $x_1=r\cos\theta$, $x_2=r\sin\theta$, 
written as
\[
-h^2\frac{\partial^2}{\partial r^2} - h^2\frac{1}{r}\frac{\partial}{\partial r}+
\Bigl(\frac{r}{2}-\frac{h(-i\partial_\theta)-\kappa}{r}\Bigr)^2-h
\]
With the usual angular momentum decomposition we are led to the family of
self-adjoint ordinary differential operators
\[  
P_m=-h^2\frac{d^2}{d r^2} - h^2\frac{1}{r}\frac{d}{d r}+
\Bigl(\frac{r}{2}-\frac{hm-\kappa}{r}\Bigr)^2 -h \,,\quad m\in\mathbb Z,
\]
each with Dirichlet boundary conditions.
We study the situation for $h=0.1$, $h=0,01$ and $h=0.001$.
The spectrum of $P$ is given as the union of the spectrum of the operators $P_m$.
In particular, the lowest point of the spectrum of $P$ is given by the minimum of the
first eigenvalues $\lambda_m(h,\kappa)$ of the operators $P_m$.

We discretize the eigenvalue problem and solve the discretized problem with 
an iterative method (using the \textsc{Scipy} library for \textsc{Python}).
The results can be seen in Figure~\ref{fig:h01} for $h=0.1$, in Figure~\ref{fig:h001} for $h=0.01$ and  in Figure~\ref{fig:h0001} for $h=0.001$.

\begin{figure}[h]
\centering
\includegraphics{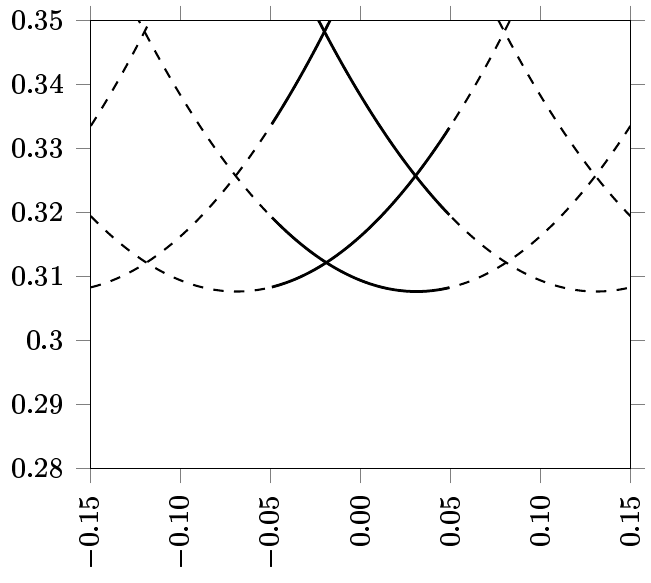}
\caption{Here we see the graphs of $\kappa\mapsto \lambda_m(h,\kappa)$, 
$-1.5\,h\leq \kappa\leq 1.5\,h\,$, $h=0.1$ and $0\leq m\leq 5\,$. Since the periodicity is $h\,$, we
have marked one period in bold.}
\label{fig:h01}
\end{figure}

\begin{figure}[h]
\centering
\includegraphics{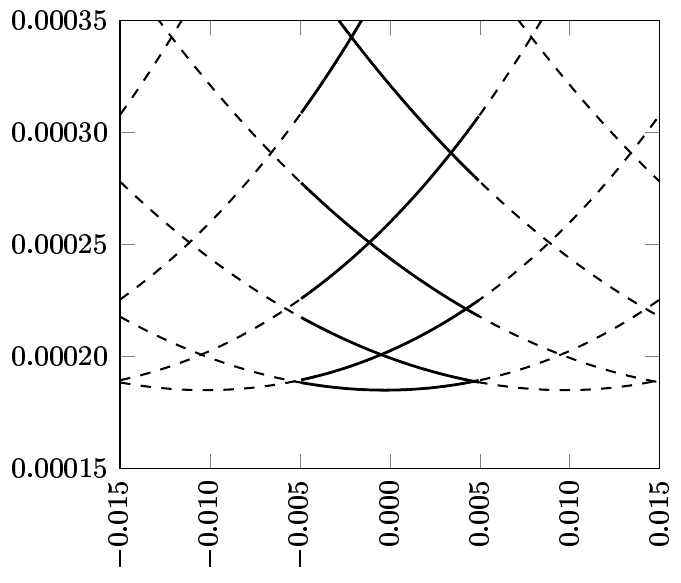}
\caption{Here we see the graphs of $\kappa\mapsto \lambda_m(h,\kappa)$, 
$-1.5\,h\leq \kappa\leq 1.5\,h\,$, $h=0.01$ and $23\leq m\leq 31\,$. Since the periodicity is $h\,$, we
have marked one period in bold.}
\label{fig:h001}
\end{figure}

\begin{figure}[h]
\centering
\includegraphics{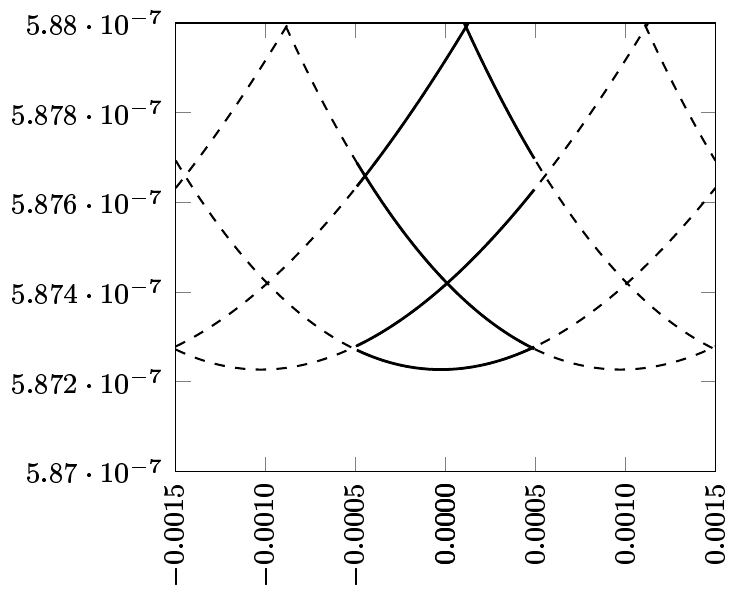}
\caption{Here we see the graphs of $\kappa\mapsto \lambda_m(h,\kappa)$, 
$-1.5\,h\leq \kappa\leq 1.5\,h\,$, $h=0.001$ and $386\leq m\leq 392$. Since the periodicity is $h\,$, we
have marked one period in bold.}
\label{fig:h0001}
\end{figure}

\begin{remark}
The eigenvalue equation $P_mu=\lambda u$ can in principle be solved in terms
of Whittaker functions. Imposing the boundary conditions one get an equation
in $\kappa$ and $\lambda$, that can be solved numerically. We tried this
approach, using, Wolfram Mathematica, but it turned out that we hit some 
exceptional values for the Whittaker functions, giving spurious extra solutions.
\end{remark}

\section*{Acknowledgements} 
During this work, the first author  was partially supported by the ANR Nosevol 
and the University of Lund. The first author thanks also Nicolas Raymond for an helpful discussion on Hodge-De Rham theory.

\FloatBarrier

\bibliographystyle{plain}

\end{document}